\newtheorem{thm}{Theorem}
\newtheorem{lem}[thm]{Lemma}
\newtheorem{prop}[thm]{Proposition}
\newtheorem{cjt}[thm]{Conjecture}
\theoremstyle{remark}
\newtheorem{rk}[thm]{Remark}
\def\Se{S^e}
\def\So{S^o}
\def\See{S^{ee}}
\def\Seo{S^{eo}}
\def\Soe{S^{oe}}
\def\Soo{S^{oo}}
\def\cSee{\cS^{ee}}
\def\cSeo{\cS^{eo}}
\def\cSoe{\cS^{oe}}
\def\cSoo{\cS^{oo}}
\def\cA{\mathcal{A}}
\def\cB{\mathcal{B}}
\def\cP{\mathcal{P}}
\def\cS{\mathcal{S}}
\def\cT{\mathcal{T}}
\def\cSe{\cS^e}
\def\cSo{\cS^o}
\def\bx{\bar{x}}
\def\by{\bar{y}}
\def\mue{\mu^e}
\def\muo{\mu^o}
\title[]{Enumeration of corner polyhedra and 3-connected Schnyder labelings}
\author[\'E. Fusy]{\'Eric Fusy}
\address{\'E.F.: CNRS/LIGM, UMR 8049, Universit\'e Gustave Eiffel, Marne-la-vall\'ee, France.}
\email{eric.fusy@u-pem.fr}
\author[E. Narmanli]{Erkan Narmanli}
\address{E.N.: LIX, \'Ecole Polytechnique, Palaiseau, France.}
\email{erkan.narmanli@normalesup.org}
\author[G. Schaeffer]{Gilles Schaeffer}
\address{G.S.: CNRS/LIX, UMR 7161, \'Ecole Polytechnique, Palaiseau, France.}
\email{schaeffe@lix.polytechnique.fr}
\begin{document}

\maketitle
%% note that you DO NOT have to put your abstract here -- it is generated by \maketitle and the \abstract and \resume commands above

\begin{abstract}
  We show that corner polyhedra and 3-connected Schnyder
  labelings join the growing list of planar structures that can be set in
  exact correspondence with (weighted) models of quadrant
  walks via a bijection due to Kenyon, Miller, Sheffield and Wilson.

  Our approach leads to a first polynomial time algorithm to count
  these structures, and to the determination of their exact
  asymptotic growth constants: the number $p_n$ of corner polyhedra
  and $s_n$ of 3-connected Schnyder labelings of size $n$ respectively
  satisfy $(p_n)^{1/n}\to 9/2$ and $(s_n)^{1/n}\to 16/3$ as $n$ goes
  to infinity.

  While the growth rates are rational, like in the case of previously
  known instances of such correspondences, the exponent of the
  asymptotic polynomial correction to the exponential growth does not
  appear to follow from the now standard Denisov-Wachtel approach, due
  to a bimodal behavior of the step set of the underlying tandem
  walk. However a heuristic argument suggests that these exponents are 
  $-1-\pi/\arccos(9/16)\approx -4.23$ for $p_n$ and
  $-1-\pi/\arccos(22/27)\approx -6.08$ for $s_n$, which would imply that the associated series are not D-finite.
\end{abstract}

\section{Introduction}\label{sec:intro}
This article is concerned with the enumerative properties of two
fascinating families of discrete geometric structures, corner polyhedra
and rigid orthogonal surfaces. Corner polyhedra,
see Figure~\ref{fig:corner_poly}(a), were introduced by Eppstein and
Mumford \cite{eppstein2010steinitz} who were interested in the
  possibility to give an elegant characterization \emph{\`a la
    Steinitz} of the graphs that can be realized as 1-skeleton for certain classes of orthogonal polyhedra. On the other hand, 
    rigid orthogonal surfaces, see Figure~\ref{fig:schnyder_lab}(a), were considered by Felsner
    \cite{felsner2003geodesic} in relation with the order dimension of
    3-polytopes.

It turns out that these geometric structures can be described in a
very similar way by certain underlying combinatorial structures,
\emph{polyhedral orientations} for corner polyhedra, see Figure~\ref{fig:corner_poly}(b), and ($3$-connected) 
\emph{Schnyder labelings} for rigid orthogonal surfaces, see Figure~\ref{fig:schnyder_lab}(b). As
illustrated by Figure~\ref{fig:table}, and as already partially
observed by several authors, \emph{e.g.} \cite{eppstein2010}, these
combinatorial counterparts are similar to those that were already observed for contact-systems of 
horizontal/vertical segments and for rectangular tilings.

After recalling in Section~\ref{models} the definition of polyhedral
orientations and Schnyder labelings, and how to recast them in terms
of bipolar orientations, we move on in Section~\ref{bij-tandem} to set
up exact correspondences with certain weighted bi-modal models of
so-called tandem quadrant walks via a bijection due to Kenyon, Miller, Sheffield
and Wilson \cite{kenyon2019bipolar}. The resulting bijections,
stated as Proposition~\ref{claim:poly} and Proposition~\ref{claim:schnyder}, allow
us to describe in Section~\ref{enum} polynomial time algorithms to
count these structures, and moreover to determine in
Theorem~\ref{thm:growth_rates} their exact asymptotic growth constants,
see also Figure~\ref{fig:table}. An exact enumeration formula is known for the case of Schnyder labelings on triangulations~\cite{bernardi2009intervals}; we show in Section~\ref{sec:triangul} that it can be recovered from our results. 

As can be observed in Figure~\ref{fig:table}, our results on the enumeration of corner polyhedra and 3-connected Schnyder labelings parallel
those for the number of plane bipolar orientations, and for the number of
transversal structures.  However the analysis is made more difficult
by the fact that the tandem walks that we have to deal with have a
bimodal behavior: the step set available at a current point depends on
the parity of the ordinate of this point. 
%This puts the complete
%analysis of the asymptotic behavior out of reach of our current
%understanding of these models, based on Denisov-Wachtel approach
%\cite{denisov2015random}. 
This puts the asymptotic analysis out of reach by current known methods:    
determining the asymptotic behaviour of our counting sequences would require an extension of  the approach by Denisov and Wachtel~\cite{denisov2015random} to a bimodal setting.  
Resorting to a plausible but conjectural version of their argument,  
we are nevertheless able to state
Conjecture~\ref{cj:asympt} on the polynomial corrections, which would
imply that the associated series are not D-finite.

A nice final touch on the
emerging global picture is the possibility to recast these results in terms
of colored pseudoline contact systems (before last line in Figure~\ref{fig:table}), as explained in Section~\ref{sec:contacts}. 
\begin{figure}
  \begin{center}\footnotesize
\begin{tabular}{|c|c|c|c|}
  \hline
 contact-system of horizontal/ & rectangular tilings  & corner polyhedra & rigid orthogonal\\
vertical segments (\cite{Tamassia})   &  (\cite{kant1997regular})  & (\cite{eppstein2010steinitz}, Sec~\ref{sec:intro})          & surfaces (\cite{felsner2003geodesic}, Sec~\ref{sec:intro})\\\hline 
  separating decompositions & transversal structures  & polyhedral orientations & Schnyder labelings\\
  (\cite{de1995bipolar})
  & (\cite{kant1997regular,fusy2009transversal})   & (\cite{eppstein2010steinitz}, Sec~\ref{defs}) & (\cite{felsner2000convex}, Sec~\ref{defs})\\\hline
  plane bipolar  & $T$-transverse bipolar &  $P$-admissible bipolar & 
  $S$-transverse  bipolar  \\
    orientations (\cite{de1995bipolar}) & orientations (\cite{Na20}) & orientations (Sec~\ref{bipolar}) & orientations (Sec~\ref{bipolar})\\\hline
tandem walks & $T$-admissible tandem & $P$-admissible tandem & $S$-admissible tandem\\
(\cite{Na20})  & walks (\cite{Na20}) &  walks (Sec~\ref{subsec:application}) &  walks (Sec~\ref{subsec:application})\\\hline
free bicolored & rigid bicolored & free tricolored  & rigid tricolored \\
contact-systems (Sec~\ref{sec:contacts}) & contact-systems (Sec~\ref{sec:contacts}) & contact-systems (Sec~\ref{sec:contacts}) & contact-systems (Sec~\ref{sec:contacts}) \\\hline
$\asymp 8^n$ (\cite{baxter2001dichromatic}) & $\asymp (27/2)^n$ (\cite{Na20}) & $\asymp (9/2)^n$  (Sec~\ref{enumasympt}) & $\asymp (16/3)^n$ (Sec~\ref{enumasympt})\\\hline
\end{tabular}
\end{center}
\caption{Four parallel families of structures. The successive rows indicate: 1) the incarnation as a geometric structure, 2) the incarnation as 
a family of decorated maps, 3) the model of (decorated) plane bipolar orientations, 4) the model of (decorated) tandem walks in the quadrant, 5) the model of contact-systems of curves, 6) the 
asymptotic growth rate.} 
\label{fig:table}
\end{figure}

\section{Presentation of the two models}\label{models}
\subsection{Definitions}\label{defs}
A \emph{planar map} (shortly hereafter, a map) is a connected multigraph embedded on the oriented sphere up to orientation-preserving homeomorphism.  It is \emph{rooted} by marking a corner, whose incident face is taken as the outer face in planar representations. Vertices and edges are called \emph{inner} or \emph{outer} depending on whether they are incident to the outer face or not. 
A map is called \emph{Eulerian} if its vertices have even degree, in which case the faces can be uniquely bicolored in light and dark faces so that the outer face is light (any edge has a dark face on one side and a light face on the other side). Dually, a map is  bipartite iff all faces have even degree, in which case the vertex bicoloration is unique, up to choosing the color of a given vertex.  

\begin{figure}
\begin{center}
\includegraphics[width=\textwidth]{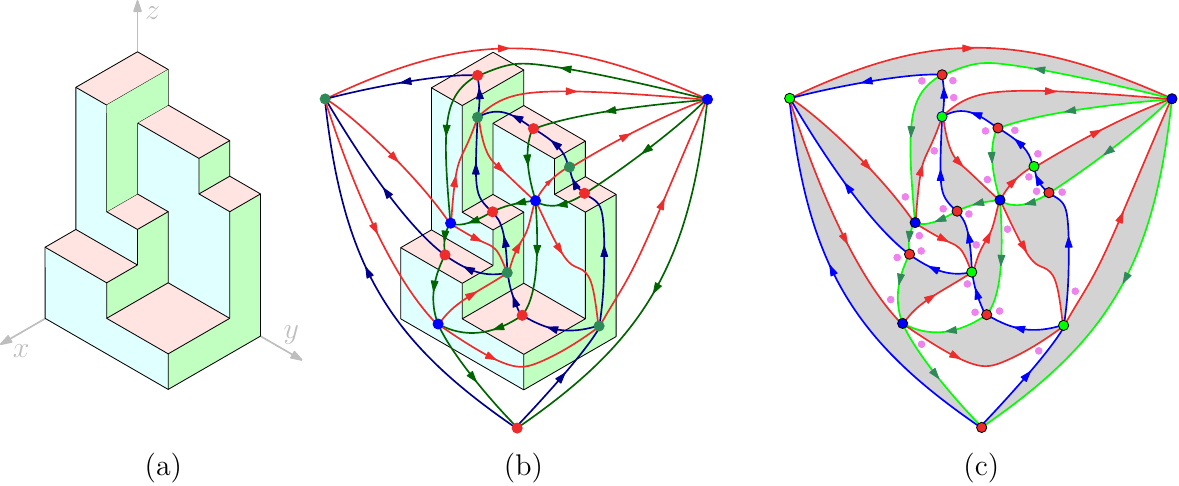}
\end{center}
\vspace{-1em}
\caption{From a corner polyhedron to an Eulerian triangulation endowed with a polyhedral orientation (extremal corners are indicated in violet, there are two such corners at each inner vertex and at each light inner face).}
\label{fig:corner_poly}
\vspace{-1em}
\end{figure}

A \emph{triangulation} is a planar map where all faces have degree $3$. It is known that a triangulation is 3-colorable iff it is Eulerian. In that case, the coloration of vertices (say in blue, green, red) is unique once the colors around a given triangle are fixed. If the triangulation is rooted, we take the convention that the root-vertex is red, and the outer vertices are colored red, green, blue in clockwise order around the outer face (i.e., walking along the outer contour with the outer face on the left). Note that every edge is also canonically colored red, green, or blue: it receives the color it misses (e.g. an edge connecting a green vertex and a blue vertex is colored red). 
In an orientation of a planar map, a corner $c=(v,e_1,e_2)$ is called \emph{lateral} if exactly one of $e_1,e_2$ is ingoing at $v$ (the other one being outgoing), it is called \emph{extremal} otherwise (either $e_1,e_2$ are both ingoing or both outgoing at $v$). 
For $T$ a rooted Eulerian triangulation, a \emph{polyhedral orientation} of $T$ is an orientation of  $T$ such that (see Figure~\ref{fig:corner_poly}(c) for an example):
\begin{itemize}
\item[(PO1):] There is no extremal corner at the outer vertices, and the outer contour is a cw cycle. 
\item[(PO2):] Every inner vertex is incident to exactly two extremal corners, and all the extremal corners are incident to light faces (hence dark face contours are either cw or ccw).% (hence the contours of the dark faces are directed cycles, either cw or ccw), .
\end{itemize}
\begin{rk}\label{rk:counting_extremal}
Based on a counting argument, it can be checked that there must be exactly two extremal corners in every inner light face
(indeed, every inner light face has either zero or two extremal corners; but by the Euler relation the number of inner light faces equals the number of inner vertices,
so that the number of extremal corners is twice the number of inner light faces). \dotfill
 %Furthermore, given that the triangulation is 3-colorable, we can see that it implies a strict color alternance of the edges around a given vertex.
\end{rk}
\begin{rk}
  Not every Eulerian triangulation admits a polyhedral orientation: in
  fact it is the case if and only if all its red/blue/green ccw triangles  are
  facial, as first shown in \cite{eppstein2010steinitz}. These so-called \emph{corner triangulations} have been enumerated in \cite{dervieux}. \dotfill
  \end{rk}
From now on, we call \emph{polyhedral orientation} a (corner) triangulation endowed with a polyhedral orientation. We let $\cP$ be the set of polyhedral orientations, and let $\cP_n$ (resp. $\cP_{a,b,c}$) be the set of polyhedral orientations with $n$ inner vertices (resp. with $a$ red inner vertices, $b$ blue inner vertices, and $c$ green inner vertices). 
Also we let $p_n=|\cP_n|$ and $p_{a,b,c}=|\cP_{a,b,c}|$ be the associated counting coefficients.

\begin{figure}[t]
\begin{center}
\includegraphics[width=\textwidth]{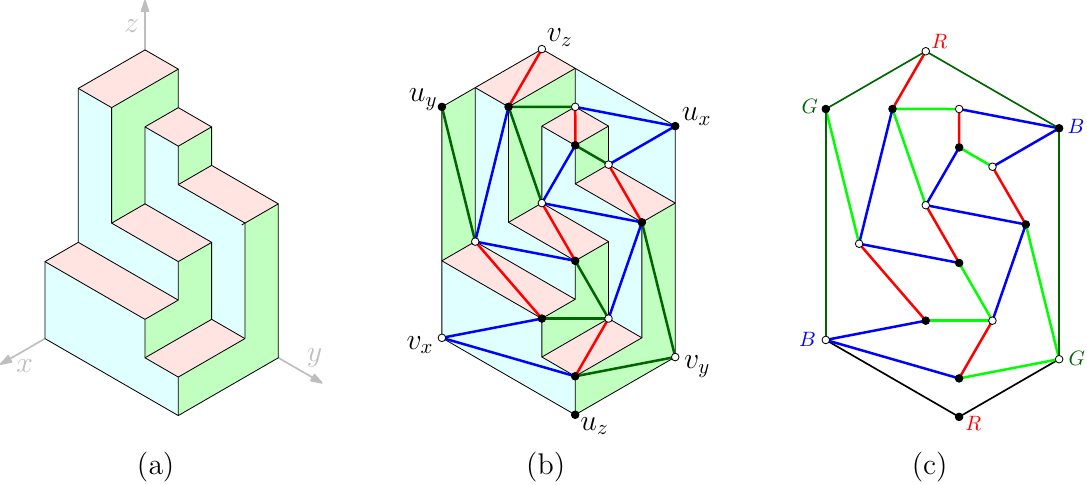}
\end{center}
\vspace{-1em}
\caption{From a rigid orthogonal surface to a $(6,4)$-dissection endowed with a Schnyder labeling (whose outer white vertices are non-isolated).}
\label{fig:schnyder_lab}
%\vspace{-1em}
\end{figure}

%\medskip

\begin{figure}
  \begin{center}\small
\begin{tabular}{|c|c|c|}
  \hline
 & polyhedral orientations  & Schnyder labelings\\\hline\hline
 size  (univariate) & $n=\#$ inner vertices & $n=\#$ inner faces \\ \hline
 families & $\cP_n$ & \phantom{\Huge a}$\cS_n, \tilde{\cS}_n, \cS_n'$\phantom{\Huge a} \\ \hline
  coefficients & $p_n$ & $s_n, \tilde{s}_n, s_n'$\\\hline\hline
 size  (refinement) & $a,b,c=\#$ inner red,blue,green vertices & $a,b= -2+\#$ white,black vertices \\ \hline
 families & $\cP_{a,b,c}$ & \phantom{\Huge a}$\cS_{a,b}, \tilde{\cS}_{a,b}, \cS_{a,b}'$\phantom{\Huge a} \\ \hline
  coefficients & $p_{a,b,c}$ & $s_{a,b}, \tilde{s}_{a,b}, s_{a,b}'$\\\hline
\end{tabular}
\end{center}
\caption{Size-parameters and counting coefficients of polyhedral orientations and Schnyder labelings.} 
\label{fig:table2}
\end{figure}

%\medskip

A \emph{$(6,4)$-dissection} is a rooted map $D$ whose outer face is a simple cycle of length $6$, and whose inner faces have degree $4$. Such a map is bipartite, and the vertex-bicoloration (in black and white vertices) is the unique one such that the root-vertex is white. 
The outer vertices are labeled $R,G,B,R,G,B$ in ccw order around the outer face, starting with the root-vertex. An outer vertex is called \emph{isolated} if it has degree $2$ (i.e., is not incident to an inner edge). 
A \emph{Schnyder labeling} of $D$ is a coloration of the inner edges of $D$ in blue, green, red, such that (see Figure~\ref{fig:schnyder_lab}(c)):
\begin{itemize}
\item[(SL1):] The two outer vertices labeled $R$ (resp. $B, G$) have their incident inner edges red (resp. blue, green).
\item[(SL2):] 
The edges at each inner vertex form, in clockwise order, 3 non-empty groups of red, green and blue edges, respectively.  
\end{itemize}
\begin{rk}
  It is known~\cite{felsner2000convex,fusy2008dissections} that a $(6,4)$-dissection admits a Schnyder labeling iff it has no multiple edge and every $4$-cycle delimits a face. These dissections are counted  in~\cite{mullin1968enumeration,fusy2008dissections,bouttier2014irreducible}. \dotfill
  \end{rk}
\begin{rk}\label{rk:corner_map}
One can classically associate to $D$ a planar map $M$ with 3 distinguished outer vertices, which is obtained from $D$ by adding in each inner face an edge that connects the two opposite white vertices, and then erasing all edges and black vertices of $D$.
Via this mapping, our definition of Schnyder labelings matches the one of Felsner~\cite{felsner2000convex}, see~Figure~\ref{fig:felsner_polyhedra}. Precisely, he considers
those corresponding to the case where the $3$ outer white vertices are non-isolated. 
In our bijection for Schnyder labelings, we will rely on another subfamily, those where the two outer $G$ vertices are non-isolated. 
\dotfill
\end{rk}
\begin{figure}
  \includegraphics[width=\textwidth]{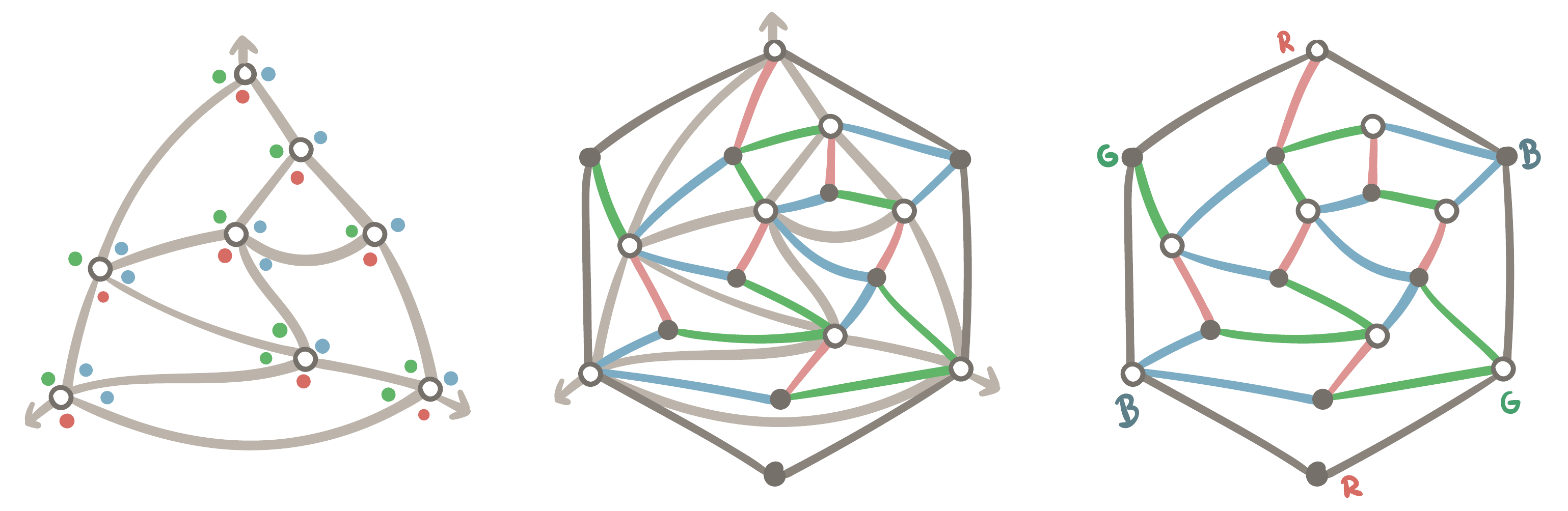}
  \caption{Correspondence between Schnyder labelings as considered by Felsner, on the left, and Schnyder labelings on $(6,4)$-dissections, on the right.}
  \label{fig:felsner_polyhedra}
\end{figure}

%(thus the face has a unique monocolored corner).

From now on, we call \emph{Schnyder labeling} a $(6,4)$-dissection endowed  with a Schnyder labeling. We let $\cS$ be the set of Schnyder labelings, let $\tilde{\cS}\subset\cS$ be the subset of those where the $3$ outer white vertices are non-isolated, and let $\cS'\subset\cS$ be the subset of those where the two outer $G$ vertices are non-isolated. 
We let $\cS_{n}$ (resp. $\tilde{\cS}_n$, $\cS'_n$) be the set of elements in $\cS$ (resp. $\tilde{\cS}$, $\cS'$) with $n$ inner faces, and let $s_n=|\cS_n|$ (resp. $\tilde{s}_n=|\tilde{\cS}_n|$, $s_n'=|\cS_n'|$) be the associated counting coefficients. 
As a  bivariate refinement, we let  $\cS_{a,b}$ (resp. $\tilde{\cS}_{a,b}$, $\cS_{a,b}'$) be the set of elements in $\cS$ (resp. $\tilde{\cS}$, $\cS'$) with $a+2$ white vertices and $b+2$ black vertices, and let $s_{a,b}=|\cS_{a,b}|$ (resp. $\tilde{s}_{a,b}=|\tilde{\cS}_{a,b}|$, $s_{a,b}'=|\cS_{a,b}'|$) be the associated counting coefficients. 
The Euler relation ensures that a $(6,4)$-dissection with $n$ inner faces has $n+4$ vertices, hence
$s_n=\sum_{a+b=n}s_{a,b}$ (resp. $\tilde{s}_n=\sum_{a+b=n}\ \tilde{s}_{a,b}$, $s_n'=\sum_{a+b=n}\ s_{a,b}'$). 

\begin{rk}\label{rk:colors_sch}
Based on a counting argument~\cite[Lem.1]{felsner2000convex}, it can be checked that, for $X\in\tilde{\cS}$, if the outer edges incident to the black vertex labeled $R$ (resp. $G$, $B$) are colored red (resp. green, blue), then the contour of every inner face has at least one edge in every color, with one of the three colors represented by two edges that are consecutive along the face contour. (The same holds for $X\in\cS'$, upon coloring red the 4 outer edges incident to an outer $R$ vertex, and blue the two outer $B-G$ edges.) 
\dotfill
\end{rk}

Figure~\ref{fig:table2} provides a summary of the notation for polyhedral orientations and Schnyder labelings.

  \subsection{Link to 3D representations}\label{sec:3drep}
  A \emph{simple orthogonal polyhedron}~\cite{eppstein2010steinitz}  is a 3D shape with the topology of a ball, and whose boundary is made of flat portions (called \emph{flats}), each flat being orthogonal to one of the $3$ coordinate axes.
  It is also required that if two flats share a boundary then their orthogonal directions are not the same, and that at most $3$ flats can intersect at a point on the boundary. The boundary yields a trivalent map (embedded on a topological 2D-sphere), 
  whose vertices correspond to the boundary points where $3$ flats meet, and whose faces correspond to the flats. A \emph{corner polyhedron} (see Figure~\ref{fig:corner_poly}(a) for an example)  is a simple orthogonal polyhedron having exactly $3$ flats whose orthogonal vector points in the coordinate-decreasing direction. These $3$ flats have to intersect, and the intersection is taken as the origin. The dual of the associated trivalent map is an Eulerian triangulation $T$ (a corner triangulation), whose outer face is taken as the one dual to the origin, with red (resp. blue, green) vertices associated to the flats whose orthogonal direction is the one of the $z$-axis (resp. $x$-axis, $y$-axis). Moreover, $T$ is naturally endowed with a polyhedral orientation, upon orienting every inner blue  (resp. green, red) edge in the direction of increasing $z$ (resp. $x$, $y$), see Figure~\ref{fig:corner_poly}(b). It is also shown in~\cite{eppstein2010steinitz} that every polyhedral orientation can be obtained in this way.  Polyhedral orientations can thus be considered as the combinatorial types of 
  corner polyhedra. Thus, $p_n:=|\cP_n|$ (resp. $p_{a,b,c}:=|\cP_{a,b,c}|$) gives the number of combinatorial types of corner polyhedra with $n+3$ facets (resp. with $a+b+c+3$ facets, among which $a+1$ are red, $b+1$ are blue, and $c+1$ are green), including the $3$ non-visible facets. 

  The union $\frak{S}$ of the boundary of a corner polyhedron $P$ and of the three 2D-quadrants 
  $Q_z=\{x,y\geq 0,z=0\},Q_x=\{y,z\geq 0,x=0\},Q_y=\{x,z\geq 0,y=0\}$ corresponds to a (non-degenerate, axial) orthogonal surface in the sense of~\cite{felsner2003geodesic}. There is another natural way to associate a decorated map to such a surface~\cite{felsner2003geodesic}.  
  Let $v_x$ (resp. $v_y,v_z$) be the extremal vertex of $P$ on the $x$-axis
  (resp. $y$-axis, $z$-axis). Let $u_x=v_y\vee v_z$, $u_y=v_x\vee v_z$ and $u_z=v_x\vee v_y$.  The \emph{white points} of $\frak{S}$ are $v_x,v_y,v_z$ and the points of $P$  (excluding the origin), called \emph{inward points of $P$}, where $3$ flats meet and the $3$ border rays leave the point in the coordinate-increasing way. The \emph{black points} of $\frak{S}$ are $u_x,u_y,u_z$ and the points of $P$, called \emph{outward points of $P$}, where $3$ flats meet and the $3$ border rays leave the point in the coordinate-decreasing way. 
  A black point $v_\bullet$ and a white point $v_\circ$ are \emph{adjacent} if $v_\bullet\geq v_\circ$ coordinatewise. 
  
  This adjacency relation 
  yields a bipartite graph, whose vertices are the black and white points, that is drawn on $\frak{S}$ (with edges as segments). The surface $\frak{S}$ is called \emph{rigid} if the drawing is crossing-free (the edges can meet only at common extremities); the example of  Figure~\ref{fig:corner_poly}(a) is non-rigid (there would be a crossing in the blue flat on the right) whereas the one in Figure~\ref{fig:schnyder_lab}(a) is rigid. In that case,  
  every edge $e=\{u,v\}$ is such that $u,v$ share exactly one coordinate. Then~$e$ is considered blue (resp. green, red) if 
  $u$ and $v$ have same $x$-coordinate (resp. $y$-coordinate, $z$-coordinate).  Upon adding the outer hexagon $v_z,u_y,v_x,u_z,v_y,u_x$ (labeled $R,G,B,R,G,B$), this embedded graph exactly gives a Schnyder labeling in $\tilde{\cS}$, see Figure~\ref{fig:schnyder_lab}(b). 
  It is shown in~\cite{felsner2003geodesic,felsner2008schnyder} that every Schnyder labeling in $\tilde{\cS}$ can be obtained in this way. 
  Schnyder labelings in $\tilde{\cS}$ can thus be considered as the combinatorial types of rigid orthogonal surfaces. The coefficient $\tilde{s}_n:=|\tilde{\cS}_n|$ 
  gives the number of combinatorial types of rigid orthogonal surfaces that arise from corner polyhedra with $n+3$ flats.  The coefficient $\tilde{s}_{a,b}:=|\tilde{\cS}_{a,b}|$ gives the number of combinatorial types of rigid orthogonal surfaces that arise from corner polyhedra with $a-1$ inward points and $b-1$ outward points. 

\begin{rk}
It would also be possible to consider the bivariate refinement $\cP_{a,b}$ (where parameters would have the same meaning as in  $\tilde{S}_{a,b}$) and the trivariate refinement $\tilde{S}_{a,b,c}$ (where parameters would have the same meaning as in $\cP_{a,b,c}$). However, we feel that the trivariate refinement is more natural for polyhedral orientations (the $3$ parameters are intrinsic to the underlying corner triangulation, 
they do not depend on which polyhedral orientation the triangulation is endowed with). Similarly, the bivariate refinement seems more natural for Schnyder labelings (the $2$ parameters are intrinsic to the underlying $(6,4)$-dissection).   \dotfill
\end{rk}

\subsection{Encoding by (constrained, decorated) plane bipolar orientations}\label{bipolar}
A \emph{plane bipolar orientation} is a rooted map endowed with an acyclic orientation with a unique source $S$ at the root-vertex, and a unique sink $N$ incident to the outer face. It is known~\cite{de1995bipolar} that a plane bipolar orientation is characterized by the following local properties (for orientations with $S$ as a  
source and $N$ as a sink), illustrated in Figure~\ref{fig:bipolar_rules}:
\begin{itemize} 
\item[(B):] Apart from $\{S,N\}$, each vertex has two lateral corners (so the incident edges form two groups: ingoing and outgoing edges).
\item[(B'):] Each face (including the outer one) has two extremal corners, so that the contour is partitioned into a left lateral path and a right lateral path that share their origins and ends, which are called the \emph{bottom vertex} and \emph{top vertex} of the face.
\end{itemize}
 The \emph{type} of a face is the integer pair $(i,j)$ such that the left (resp. right) lateral path of the face has length $i+1$ (resp. $j+1$). The \emph{outer type} of the orientation is the type of the outer face.
%\begin{rk}
 %   Using lateral and extremal corners, we see that Property~(B) states that every vertex has two lateral corners and~(B') states that every face has exactly two extremal corners (at the bottom and the top vertices), see Figure~\ref{fig:bipolar_rules}.
%\end{rk}
\begin{figure}[t]
  \begin{center}
    \includegraphics[width=\textwidth]{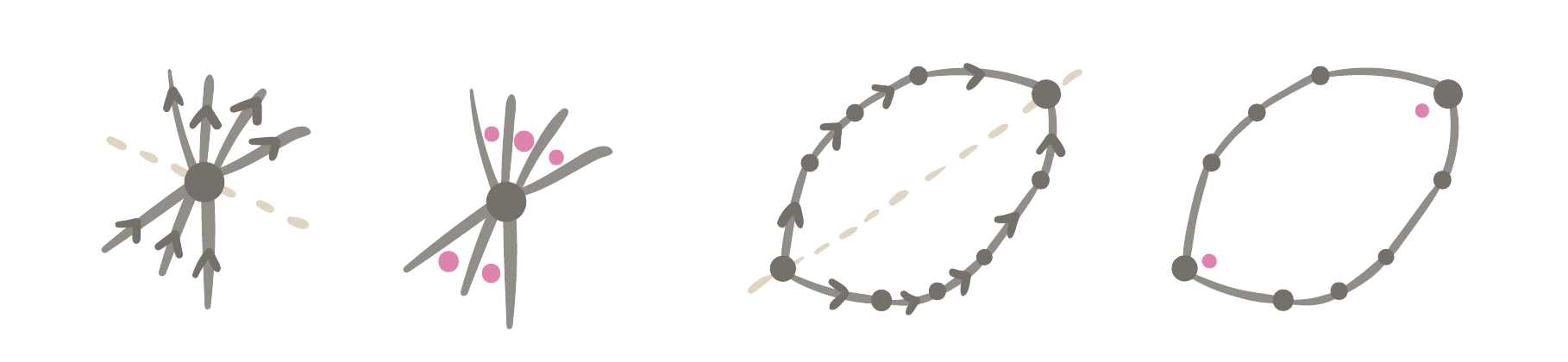}
  \end{center}
\vspace{-1em}
    \caption{Local rules for plane bipolar orientation, (B) on the left and~(B') on the right, and their translation in terms of lateral and extremal corners.}
    \label{fig:bipolar_rules}
\vspace{-1em}
\end{figure}
If the underlying map of the orientation is bipartite, i.e., the type $(i,j)$ of every inner face is such that $i+j$ is even, then the vertex bicoloration is chosen such that $N$ is white. An inner face is called a \emph{blacktip face} (resp. \emph{whitetip face}) if its top vertex is black (resp. white). 

A bipartite plane bipolar orientation is called \emph{$P$-admissible} iff
 \begin{itemize}
 \item[(PA1):] it has outer type $(0,k)$ for some even~$k\geq 2$, 
 \item[(PA2):]  the type $(i,j)$ of every blacktip (resp. whitetip) inner face is such that $i\geq 1$ (resp. $j\geq 1$).
 \end{itemize}

\begin{prop}\label{claim:poly}
For $n\geq 1$, polyhedral orientations with $n$ inner vertices, among which $a$ are red, $b$ are blue, and $c$ are green, are in bijection with $P$-admissible bipolar orientations with~$n+1$ edges, $a+1$ white vertices, $b+1$ black vertices, and $c$ inner faces.
\end{prop}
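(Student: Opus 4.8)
The plan is to exhibit an explicit bijection by passing through the correspondence between polyhedral orientations of Eulerian triangulations and plane bipolar orientations of a derived map, and then to check that the constraints (PO1)--(PO2) translate exactly into the constraints (PA1)--(PA2). First I would set up the map transformation: given a polyhedral orientation of a (corner) triangulation $T$ with $n$ inner vertices, remove the outer vertices and the outer edges, keeping only the inner vertices and the edges among them; equivalently, contract or delete boundary structure so as to obtain a plane bipolar orientation $O$. The natural source $S$ and sink $N$ of $O$ should be produced from two of the three outer vertices of $T$ (the red root-vertex playing a distinguished role), using the fact that by (PO1) the outer contour is a cw cycle with no extremal corner, so the orientation restricted near the boundary is already ``bipolar-like''. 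The bicoloration of $O$ comes from the $3$-coloring of $T$: since $T$ is Eulerian hence $3$-colorable, and since an edge of $O$ joins two inner vertices of distinct colors, one checks that $O$ is bipartite, with white/black corresponding to two of the three color classes after the third (say red) has been absorbed into the outer face — this should be arranged so that $N$ is white, giving $a+1$ white and $b+1$ black vertices, while the $c$ green inner vertices of $T$ become the $c$ inner faces of $O$ (green vertices of $T$ being ``dual'' to faces of the blue/white-black subgraph), and the $n+1$ edges of $O$ count the non-red inner edges plus one boundary edge.

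The second and central step is the local translation of the defining conditions. Condition (PO2) says every inner vertex of $T$ has exactly two extremal corners and all extremal corners lie in light faces; the blue/green/red color classes and the light/dark bicoloration of $T$ are tightly linked, and I would argue that ``two extremal corners at each inner vertex'' is precisely the plane bipolar vertex condition (B) transported to $O$, while ``all extremal corners incident to light faces, hence dark face contours are cw or ccw'' becomes the face condition (B') together with the acyclicity and the bipolar structure. Using Remark~\ref{rk:counting_extremal}, the two extremal corners of each inner light face of $T$ become the two extremal corners (bottom/top) of each face of $O$. The refined conditions (PA1)--(PA2) then come from tracking what happens at green inner vertices of $T$: a green inner vertex, seen as a face of $O$, has a top vertex that is black or white according to the local orientation pattern around that green vertex in $T$, and the polyhedral rule (the cyclic red/green/blue pattern of ingoing/outgoing edges forced by (PO2) combined with the $3$-coloring) forces that a blacktip face has left-path length $\geq 2$, i.e. $i\geq 1$, and a whitetip face has $j\geq 1$; the outer type $(0,k)$ with $k$ even $\geq 2$ comes from the cw outer cycle of $T$ and parity of the boundary hexagon-like structure. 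Finally I would exhibit the inverse map — adding back the three outer vertices with the prescribed colors and reconnecting each black/white vertex and face according to (PA1)--(PA2) — and check that the two maps are mutually inverse, which is essentially forced once the local dictionary is established.

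I expect the main obstacle to be the bookkeeping at the boundary: getting the correct outer type $(0,k)$ and the exact vertex/edge/face counts ($n+1$ edges, $a+1$ white, $b+1$ black, $c$ inner faces) requires care about which outer vertices and outer edges of $T$ survive, how the red color class is handled, and a clean application of the Euler relation, so that the passage from ``$n$ inner vertices of $T$, three boundary vertices'' to ``$n+1$ edges of $O$'' comes out right. The interior local rules, by contrast, should be a routine case analysis once the Eulerian $3$-coloring is used to pin down the orientation pattern around each vertex and face, since the three colors of $T$ are in bijection with the roles \{white vertices, black vertices, inner faces\} of $O$. A secondary subtlety is verifying that the resulting $O$ is genuinely acyclic with a single source and single sink — this should follow from acyclicity of the polyhedral orientation (dark face contours being consistently cw or ccw but with light faces non-directed prevents directed cycles) together with (PO1).
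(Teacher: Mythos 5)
Your plan — turn one color class of inner vertices of $T$ into the inner faces of a derived bipartite plane bipolar orientation, and then show that (PO1)--(PO2) translate into (PA1)--(PA2) — is exactly the route the paper takes. But as written, the proposal has two genuine problems.

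First, the construction itself is misdescribed and internally inconsistent. You write that one should ``remove the outer vertices and the outer edges, keeping only the inner vertices and the edges among them.'' That leaves all three color classes among the inner vertices, so the resulting map cannot be bipartite and the green vertices would not have been eliminated. A few sentences later you say that red is ``absorbed into the outer face'' but also that ``the $c$ green inner vertices of $T$ become the $c$ inner faces of $O$,'' which are two conflicting choices of which color class is dualized. What the paper actually does is remove \emph{all} green vertices (inner and outer), keep the blue and red vertices, and recolor blue as black and red as white; the outer green vertex contributes to the outer type $(0,d-2)$ where $d$ is its degree, and the edge, vertex and face counts then fall out directly without any appeal to Euler's relation. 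With your description one cannot even begin the verification of (PA1)--(PA2), because the source/sink and the black/white/face roles are not pinned down.

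Second, the real technical content — showing that (PO2) is equivalent to (PA2) for the derived orientation — is replaced by an assertion (``the cyclic red/green/blue pattern ... forces that a blacktip face has left-path length $\geq 1$''). The paper's argument rests on a specific corner correspondence: every corner of the derived bipolar orientation $X$ corresponds to two corners of $P$, one in a light and one in a dark triangle; the light one is called the attached corner, and since dark corners are always lateral, a corner of $X$ is extremal iff its attached corner in $P$ is lateral. Applying this to a green inner vertex $g$ and its corresponding face $f$ of $X$, Remark~\ref{rk:counting_extremal} forces the two extremal corners of $g$ to sit in the two distinct light triangles touching the top and bottom vertices of $f$, and that distinctness is precisely what yields the existence of a black-to-white edge on the left boundary and a white-to-black edge on the right boundary, i.e.\ (PA2). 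Without this mechanism (or an equivalent local analysis, as shown in Figure~\ref{subfig:p-admissible_face}), there is no proof of the constraint $i\geq 1$ (resp.\ $j\geq 1$) for blacktip (resp.\ whitetip) faces, and the bijection claim is unsupported. Note also that the paper cites a lemma from~\cite{eppstein2010steinitz} to justify that the green-deleted, recolored map is a plane bipolar orientation; your alternative acyclicity argument via dark-face contours is plausible in spirit but is not developed, and as given would not on its own establish the single-source/single-sink property.
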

  \begin{proof}
    From a polyhedral orientation~$P$, its image~$X$ is obtained by removing all the green vertices, and recoloring the blue vertices
    as black, and the red vertices as white, see Figure~\ref{fig:poly_bijection}.   
   Clearly, $X$ is bipartite, and it forms a plane bipolar orientation~\cite[Lem 18]{eppstein2010steinitz} whose source (resp. sink) corresponds to the outer blue (resp. red) vertex of $P$.
Moreover, $X$ has outer type $(0,d-2)$, with $d\geq 4$ the degree of the green outer vertex of $P$. Hence, (PA1) holds for~$X$. 
Note that every corner $c$ in~$X$ correspond to exactly two corners in~$P$, respectively in a light and in a dark triangle. The light one is called
the \emph{attached corner} of $c$. (This gives a 1-to-1 correspondence between the corners of $X$ and the light corners at blue or red vertices in $P$.)  
Moreover, since the dark corners are always lateral, a corner in $X$ is extremal iff its attached corner is lateral. 
For $g$ a green inner vertex of $P$, with $f$ the corresponding inner face of $X$, 
the corner attached to the top corner (resp. bottom corner) of $f$ has thus to be lateral. Hence, by Remark~\ref{rk:counting_extremal},
the corner at $g$ in the same light triangle has to be extremal. Thus, the two extremal corners of $g$ are those that are  
in the light triangle $t_{\mathrm{up}}$ (resp. $t_{\mathrm{down}}$) touching the top-vertex (resp. the bottom-vertex) of $f$. 
The fact that these light triangles are different easily ensures that the left contour of $f$ contains an edge from a black to a white vertex, and the right contour of $f$
contains an edge from a white to a black vertex (see Figure~\ref{subfig:p-admissible_face} showing
the $4$ cases, depending on the colors of the top-vertex and bottom-vertex of $f$). This is equivalent to Condition (PA2) being satisfied by $f$. Hence, $X$ is $P$-admissible.

    \begin{figure}[t]
        \includegraphics[width=\textwidth]{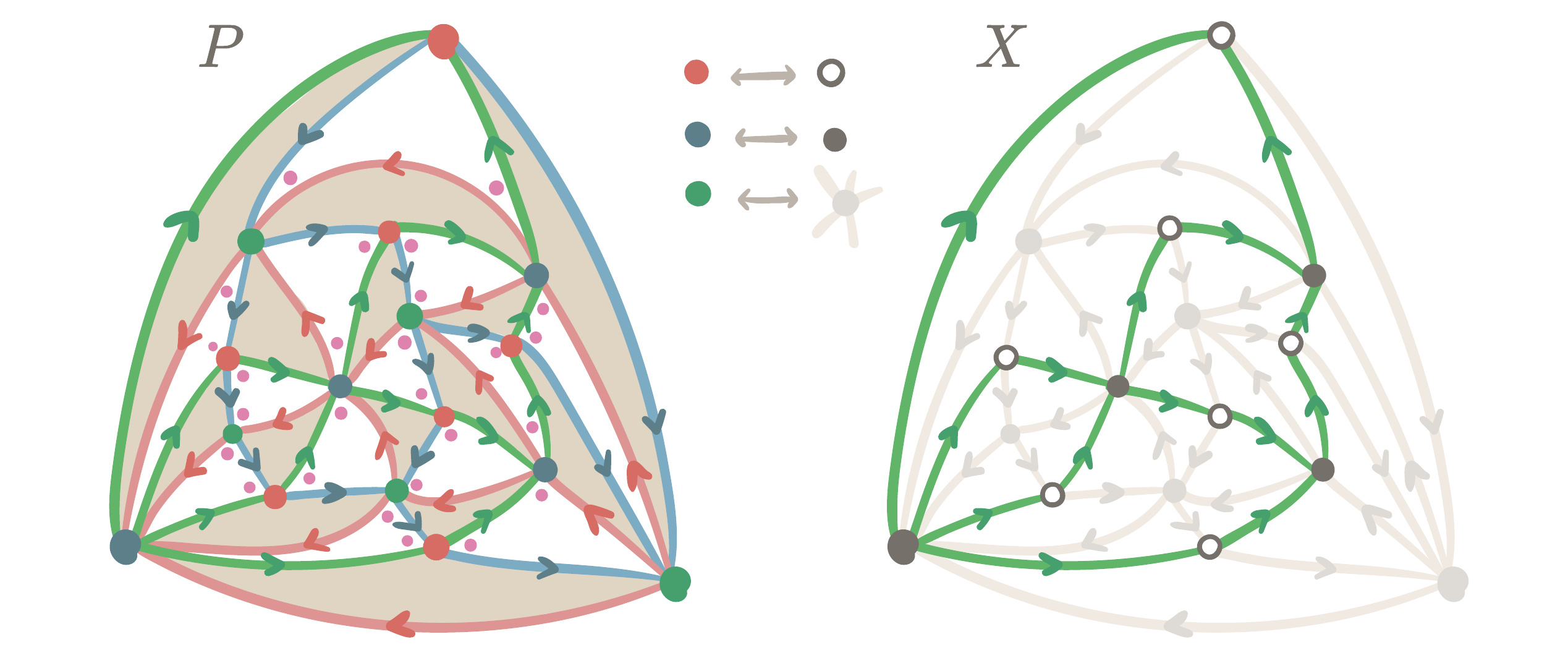}
        \vspace{-1em}
        \caption{On the left, a polyhedral orientation (with dots at extremal corners). On the right, the corresponding~$P$-admissible bipolar orientation.}
        \label{fig:poly_bijection}
        \vspace{-1em}
    \end{figure}

    \begin{figure}[t]
      \centering
      \includegraphics[width=0.8\textwidth]{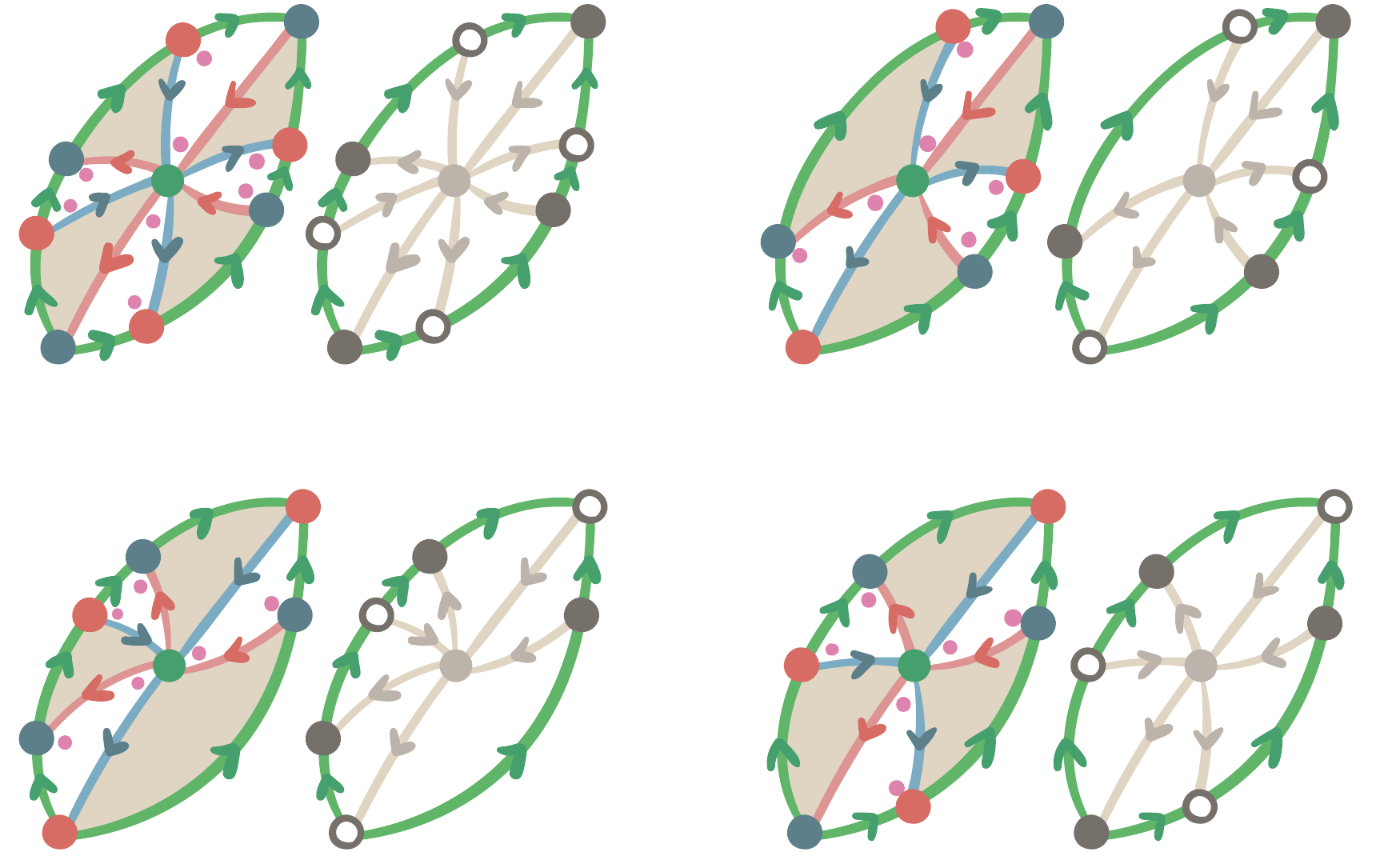}
      \caption{The correspondence for an inner green vertex $g$, associated to an inner face $f$ in the plane bipolar orientation, in the 4 different cases depending on the colors
      of the top-vertex and bottom-vertex of $f$.}
      \label{subfig:p-admissible_face}
    \end{figure}
        
Now, from a~$P$-admissible bipolar orientation~$X$, its image~$P$ is constructed as follows:
  \begin{enumerate}[label=(\roman*)]
      \item \label{it:2} Add a so-called face-vertex~$v_f$ in every face~$f$ of $X$, and connect $v_f$ to all corners around $f$.
      \item \label{it:3} Recolor all black (resp. white) vertices in blue (resp. in red); and all face-vertices in green.
      \item \label{it:4} Let~$v_{ext}$ be the face-vertex in the outer face of $X$: we orient its incident edges toward blue (former black) vertices and away from red (former white) vertices. In particular, the edge between $v_{ext}$ and the sink (resp. source) vertex of $X$ goes toward (resp. away from) $v_{ext}$ and edge directions alternate around $v_{ext}$.
      \item \label{it:5} For every inner face~$f$ of~$X$, we mark every corner attached to a lateral corner in~$X$ and we mark the two corners of~$v_f$ incident to the two light triangles  incident to the bottom and top corners of~$f$ (these two light triangles are distinct, due to (PA2)). Then there is exactly one way to orient blue and red edges so that 
      the extremal corners within $f$ are the marked ones, see Figure~\ref{subfig:p-admissible_face}.
  \end{enumerate}
  
  The bipartite nature of~$X$ and~\ref{it:3} ensures that~$P$ is~3-colorable, and~(PO1) follows from~\ref{it:4}. 
  By construction, all dark corners are lateral, and (PO2) is satisfied at all inner green vertices. 
  As for red and blue inner vertices, the construction is such that a corner of $X$ is extremal iff the attached corner in $P$ is lateral. Since each non-pole vertex of $X$
  has two lateral corners, the corresponding red or blue inner vertex in $P$ must have two extremal corners. Hence, (PO2) is satisfied at red and blue inner vertices. 

The two mappings are clearly inverse to each other, hence give a bijection.     
  \end{proof}

An \emph{$S$-transverse bipolar orientation} is a $(6,4)$-dissection $D$, where the edges are partitioned into \emph{plain edges} that are directed, and \emph{transversal edges} that are undirected, so that the following conditions are satisfied: 
\begin{itemize}
\item[(ST1):] Plain edges span all vertices of $D$, and form a (bipartite) plane bipolar orientation $X$ of outer type $(2,2)$, with at least one inner face. 
% and such that every inner face has (even) degree at least $6$.
\item[(ST2):] Each transversal edge is within an inner face $f$ of $X$, and it connects a black vertex  in the interior of the left lateral path of $f$ and a white vertex in the interior of the  right lateral path of $f$. Moreover, in every inner face $f$, every black (resp. white) vertex in the interior of the left (resp. right) lateral of $f$ is incident to at least one transversal edge.  
\end{itemize}

\begin{prop}\label{claim:schnyder}
For $n\geq 1$, Schnyder labelings with $n$ inner faces, $a+2$ white vertices and $b+2$ black vertices ($n=a+b$), and whose two outer~$G$ vertices are non-isolated, are in bijection with $S$-transverse bipolar orientations with~$n+4$ vertices, among which $a+2$ are white and $b+2$ are black.
\end{prop}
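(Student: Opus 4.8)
The plan is to mirror the structure of the proof of Proposition~\ref{claim:poly}, building two mutually inverse mappings between Schnyder labelings in $\tilde{\cS}$ (with both $G$ outer vertices non-isolated) and $S$-transverse bipolar orientations, while carefully tracking the vertex counts. The natural idea is to use the two outer vertices labeled $G$ as the source and sink of the underlying bipolar orientation $X$, and to let the plain edges be the inner edges colored green (together with the outer edges incident to the two $G$-vertices), while the transversal edges record the red/blue information. Concretely, starting from a Schnyder labeling $D$ with its coloring, I would: first delete the two outer vertices labeled $R$ and the two labeled $B$ and their incident outer edges, keeping only the hexagon's two $G$-vertices as the poles $S,N$; then declare the green inner edges (plus the two $G$-incident outer edges) to be \emph{plain}, oriented in the canonical Schnyder direction (toward the $G$-sink), and the remaining inner edges to be \emph{transversal}. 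One checks that the plain edges span all vertices — here the non-isolation of both $G$ outer vertices is what guarantees $S$ and $N$ are spanned — and form a plane bipolar orientation $X$; condition (SL2) (three non-empty cw color groups at each inner vertex) is exactly what forces each non-pole vertex to have its green edges forming one ingoing and one outgoing block, i.e. (B), and the hexagon structure forces outer type $(2,2)$.

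Next I would verify (ST2). Each inner face $f$ of $X$ arises from gluing a chain of quadrilateral faces of $D$ across their red/blue diagonals, i.e., the transversal edges inside $f$ are precisely the non-green inner edges that got "swallowed"; Remark~\ref{rk:colors_sch} (each inner face contour of $D$ having all three colors, one doubled consecutively) is the key input ensuring that the green edges of each $4$-face split its boundary correctly and that the red/blue edge sits between a black vertex on the left lateral of $f$ and a white vertex on the right lateral. The "moreover" part of (ST2) — every black (resp. white) vertex interior to the left (resp. right) lateral is incident to a transversal edge — should follow because such a vertex has, by (SL2), a non-empty red-or-blue block of edges, all of which become transversal; a vertex with no transversal edge incident would have to be a pole or lie on the outer face. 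For the reverse map, from an $S$-transverse orientation $X$ I would reinsert, in each inner face $f$, the quadrilateral structure: the transversal edges together with the poles of $f$ cut $f$ into $4$-gons, I color the plain edges green, color each transversal edge red or blue according to the colors of its two endpoints along the two laterals of $f$ (one shared coordinate determines the color, just as in Section~\ref{sec:3drep}), re-attach the outer hexagon labeled $R,G,B,R,G,B$ with the two new $R$-vertices collecting the red edges and the two $B$-vertices the blue edges, and check (SL1)–(SL2). The vertex bookkeeping is immediate: $X$ has $n+4$ vertices because a $(6,4)$-dissection with $n$ inner faces has $n+4$ vertices and the hexagon contributes exactly the $2$ deleted $R$- plus $2$ deleted $B$-vertices that are removed/reinstated, so $n$ inner faces of $D$ correspond to $n$ inner faces of $X$ plus the outer one... more precisely $X$ keeps all inner faces of $D$ that are not absorbed; I will instead argue via $X$ having $n+4$ vertices and the two $G$-vertices serving as poles, and deduce the inner-face count of $X$ from Euler.

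The main obstacle I anticipate is proving that the plain (green) edges really do form a \emph{bipolar} orientation of outer type $(2,2)$ — i.e. acyclicity with unique source and sink at the two $G$-vertices — and dually that the reconstruction step inside each inner face $f$ is unambiguous. For bipolarity one typically invokes the known structure theory of Schnyder labelings (the three monochromatic "trees" / the fact, analogous to $\cite{felsner2000convex}$, that following green edges from any vertex reaches the $G$-sink without cycling); I would cite the relevant lemma of Felsner rather than reprove it, exactly as the proof of Proposition~\ref{claim:poly} cites \cite[Lem 18]{eppstein2010steinitz}. For the face-by-face reconstruction, the delicate point is that inside $f$ the cyclic sequence of transversal-edge endpoints along the left lateral (all black, by (ST2)) and along the right lateral (all white) must match up so that cutting along transversal edges yields exactly quadrilateral faces of $D$ with a consistent Schnyder coloring; this is the analogue of step~\ref{it:5} in the previous proof and, as there, should reduce to a short case check (the four cases according to the colors of $f$'s bottom and top vertices, as in Figure~\ref{subfig:p-admissible_face}), with Remark~\ref{rk:colors_sch} supplying the combinatorial constraint that makes the four cases exhaustive and mutually exclusive. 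Once both maps are defined, checking they are inverse is routine, and restricting to $\tilde{\cS}$ versus the non-isolation hypothesis on the $G$-vertices is handled by the observation made at the start about $S,N$ being spanned.
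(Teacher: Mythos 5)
Your decomposition is reversed relative to the paper's, and the reversal is not cosmetic: it breaks the argument. The paper takes the red and blue edges as the plain (directed) edges of $X$ --- red oriented black$\to$white, blue oriented white$\to$black --- and the green edges as the transversal edges, with the two outer vertices labeled $R$ (one black, one white) as source and sink. This choice is exactly what makes condition~(B) hold automatically: at any inner vertex the (SL2) arrangement [red block][green block][blue block] means the plain edges split into a contiguous ingoing block and a contiguous outgoing block, the green block in between being transversal. If instead you declare the green edges plain, you must orient them, and any orientation determined by the bipartition (all green edges black$\to$white, or all white$\to$black) makes every plain edge at a given inner vertex point the same way, so that vertex becomes a local source or sink and (B) fails everywhere except at the two poles. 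Nor is there a ``canonical Schnyder direction'' to fall back on: a Schnyder labeling of a $(6,4)$-dissection is a coloring of edges, not an orientation, and the directed trees of Felsner's Schnyder woods live on the associated 3-connected map $M$ of Remark~\ref{rk:corner_map}, whose edges correspond to inner \emph{faces} of $D$ rather than to edges of $D$, so they do not induce an orientation of the green $D$-edges.

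Two further discrepancies are worth noting. The paper does not establish bipolarity by citing Felsner; it gives a self-contained minimal-directed-cycle argument: assuming a minimal directed cycle $\gamma$ in the plain subgraph, (SL1)--(SL2) force a transversal edge to cross from $\gamma$ into its interior, and following plain edges forward and backward from its endpoint back to $\gamma$ produces a smaller directed cycle, a contradiction. And the non-isolation of the two $G$-vertices enters not because $S,N$ might fail to be spanned --- the poles are the $R$-labeled vertices, which are always present on the outer hexagon --- but because the $G$-vertices lie in the interiors of the two lateral paths of the outer face, all their incident inner edges are green hence transversal, and condition~(ST2) requires exactly such vertices to be incident to at least one transversal edge.
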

\begin{proof}
The bijection is defined as follows. Given a Schnyder labeling $L$ whose $G$ outer vertices are non-isolated, 
let $s$ (resp. $t$) be the outer black (resp. white) vertex labeled $R$. 
 The \emph{left} (resp. \emph{right}) lateral path of $L$ is the path of outer edges from $s$ to $t$ with the outer face on its left (resp. right). 
 We color (red, blue, red) the edges on the left (resp. right) lateral path. We orient the red edges from black to white and the blue edges from white to black (in particular, the 
 left lateral path and right lateral path are directed from $s$ to $t$), and
 leave the green edges undirected. We claim that we obtain an $S$-transverse bipolar orientation, with source $s$ and sink $t$.  The main point is to check that the oriented map
 $X$ formed by the oriented edges (which spans all the vertices of $L$) 
 is acyclic. 
 
   \begin{figure}[t]
    \begin{center}
    \includegraphics[width=\textwidth]{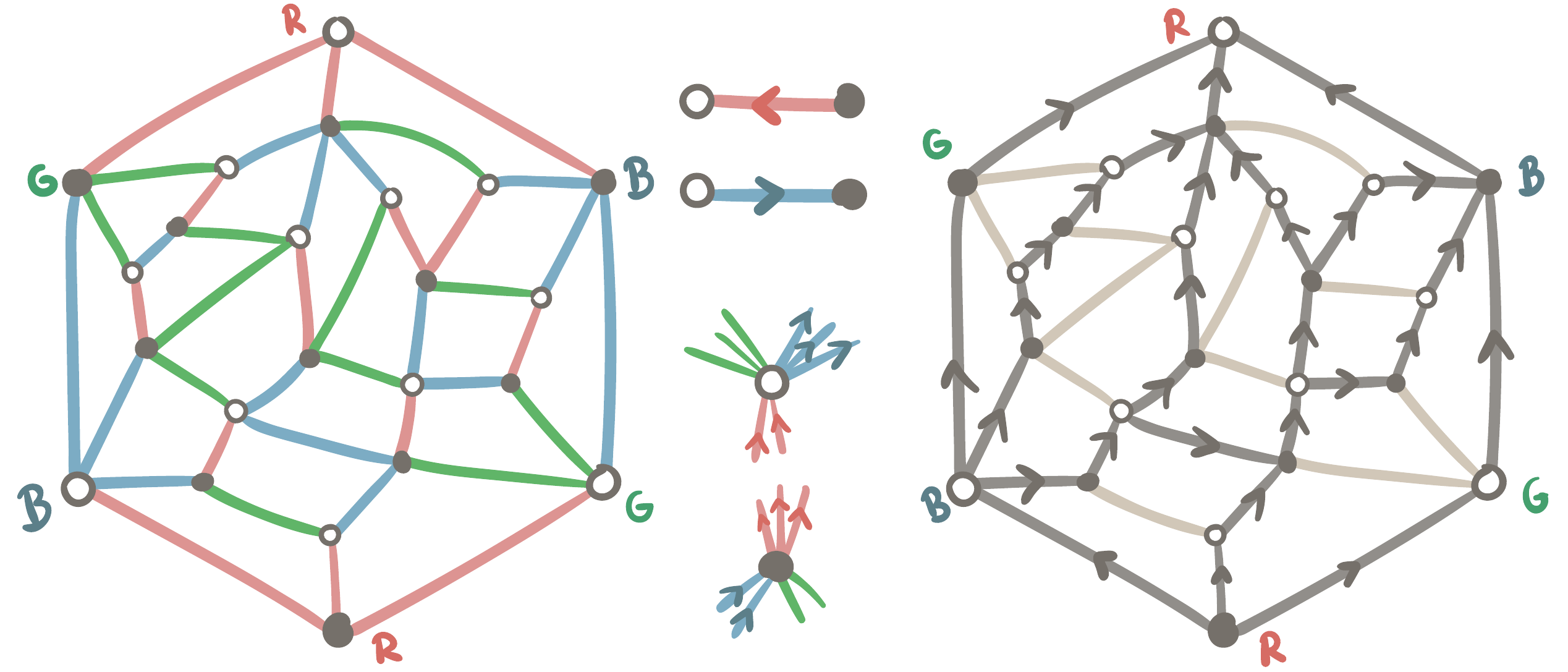}
    \end{center}
    \vspace{-1em} 
    \caption{On the left, a Schnyder labeling (with non-isolated outer G vertices), and on the right the corresponding~$S$-transverse bipolar orientation.}
    \label{fig:claim_schnyder_example}
  \vspace{-1em}
   \end{figure}
   
 Assume that $X$ has a directed cycle $\gamma$, and consider a minimal one, i.e., whose interior $\gamma^{\circ}$
 does not contain the interior of another directed cycle of $X$.   
 %Note that $\gamma$ can not pass by the two outer vertices labeled $R$ (the source and the sink), nor by the two outer vertices labeled $G$ (the one on the left lateral path has a unique outgoing edge, connected to $t$, the one on the right lateral path has a unique
 %ingoing edge, connected to $s$), nor by the two outer vertices labeled $B$ (the one on the left lateral path has a unique ingoing edge, connected to $s$, the one on the right lateral path has a unique outgoing edge, connected to $t$). Hence, $\gamma$ passes only by inner vertices of $L$.  
The local conditions (SL1)-(SL2) imply that if $\gamma$ is clockwise (resp. counterclockwise), then any incidence of a transversal edge in $\gamma^\circ$ with 
a vertex $v$ on $\gamma$ must be such that $v$ is black (resp. white), and moreover every black (resp. white) vertex on $\gamma$ must be incident to at least one transversal edge in $\gamma^\circ$.  Hence, if $\gamma$ is clockwise (resp. counterclockwise), then there must be a transversal edge leaving a black vertex on $\gamma$ and whose other extremity $v$ is strictly in $\gamma^\circ$. Note that $s,t$ are the only extremal vertices of $X$, and they are exterior to $\gamma$. Hence, from $v$ starts a path of outgoing edges (the next edge at each step is an outgoing edge of the current vertex),   
which can not loop by minimality of $\gamma$, hence has to reach $\gamma$ at some vertex $v'$. Similarly, from $v$ starts a path of ingoing edges (the next edge at each step is an ingoing edge of the current vertex), which can not loop by minimality of $\gamma$, hence has to reach $\gamma$ at some vertex $v''\neq v'$. These two paths, together with  the  path 
on $\gamma$ connecting $v'$ to $v''$, form a directed cycle whose interior is in $\gamma^{\circ}$, a contradiction. 

Hence, $X$ is acyclic. Since it has a single source and a single sink (both incident to the outer face),
it is a plane bipolar orientation, and clearly its left and right outer paths are the left and right outer paths of $L$ defined above. Hence, $X$ has outer type $(2,2)$, and 
every transversal edge is within an inner face of $X$. Moreover, the local conditions (SL1)-(SL2)
 easily ensure that every transversal edge within an inner face $f$ of $X$ 
has to connect a black vertex in the interior of the left lateral path of $f$ to a white vertex in the interior of the 
right lateral path of $f$, and that every black (resp. white) vertex in the interior of the 
left (resp. right) lateral path of $f$ is incident to at least one transversal edge (the fact that in $L$ the two outer $G$ are non-isolated is necessary to have this property satisfied at these two vertices). 

The inverse mapping is defined as follows. Starting from an $S$-transverse bipolar orientation, with $X$ the part made by the plane bipolar orientation, 
we color green the transversal edges, and color red (resp. blue) the edges of $X$  
that are directed from black to white (resp. from white to black), and we forget the edge directions and the colors of the outer edges. The outer vertices of the obtained edge-colored
map $L$ are labeled $R,G,B,R,G,B$ in counterclockwise order around the outer face, starting from the sink of~$X$. 
 Condition (B) of plane bipolar orientations and condition (ST2) imply that condition (SL2) is satisfied,
and moreover that the two $G$ outer vertices are incident to at least one transversal edge, and every incidence of a transversal edge with an outer vertex must be with one of the two outer vertices labeled $G$. Condition (ST2) also implies that every inner face of $X$ has at least one black vertex in the interior of its left lateral path and at least one white vertex in the interior of its right lateral path. This easily implies that the left outer $G$ has indegree $1$ and outdegree $1$ in $X$, 
so that in $L$ all inner edges incident to the left outer $G$ (and similarly, the right outer $G$) are green.  Similarly, the left outer $B$ has indegree $1$ in $X$, so that
in $L$ all inner edges incident to this vertex (and similarly, to the outer $B$ on the right side) are blue. 
And all inner edges incident to one of the two outer vertices labeled $R$ (which are the source and sink of the bipolar orientation) are red. 
Hence, Condition (SL1) is satisfied, so that $L$ is a Schnyder labeling (whose two outer $G$ are non-isolated). 

Finally, the two mappings are clearly inverse to each other, hence give a bijection. 
\end{proof}

\begin{rk}\label{rk:constraint}
For enumerative purposes, the constraint that the two outer~$G$ vertices are non-isolated is mild.
Indeed, we have $s_n=s_n'+2s_{n-1}'+s_{n-2}'$ for $n\geq 4$ (the three terms correspond to having $0$, $1$, or $2$ isolated vertices among the two outer $G$ vertices).
Similarly, for bivariate enumeration, we have $s_{a,b}=s_{a,b}'+s_{a,b-1}'+s_{a-1,b}'+s_{a-1,b-1}'$ for $a\geq 2$ and $b\geq 2$. 
% Indeed, if $s_n$ denotes the number of Schnyder labelings with $n$ inner faces (with $n\geq 2$), and $s_n'$ denotes the number of those where the two outer vertices labeled~$G$ are non-isolated, then  $s_n=s_n'+2s_{n-1}'+s_{n-2}'$ for $n\geq 4$ (the three terms correspond to having $0$, $1$, or $2$ isolated vertices among the two outer $G$ vertices).
%Similarly, if $s_{a,b}$ denotes the number of Schnyder labelings with $a+1$ white vertices and $b+1$ black vertices, and $s_{a,b}'$ denotes the number of those where the two outer vertices labeled~$G$ are non-isolated, then  $s_{a,b}=s_{a,b}'+s_{a,b-1}'+s_{a-1,b}'+s_{a-1,b-1}'$ for $a\geq 3$ and $b\geq 3$.  
 \dotfill
\end{rk}

\begin{rk}\label{rk:transvers}
Let $X$ be an $S$-transverse bipolar orientation, and let $f$ be an inner face of $X$, with $q_0,\ldots,q_{m+1}$ the
quadrangular faces within $f$, ordered from bottom to top. Let $\gamma$ be the path from the first to the last black vertex
on the strict left boundary of $f$ (i.e., excluding the extremal vertices of $f$), and let $2\ell$ be its length. Let $\gamma'$ be the path from the first to the last white vertex
on the strict right boundary of $f$, and let $2r$ be its length.  
It is easy to see that for $h\in[1..m]$, $q_h$ either has two edges on $\gamma$ and none on $\gamma'$, or the opposite. We can thus attach to $f$ a word in $\frak{S}(o^\ell \bar{o}^r)$ giving the types of $q_1,\ldots,q_m$ ($o$ if the face has two edges on $\gamma$, $\bar{o}$ otherwise). It completely encodes the configuration of transversal edges within $f$, and any such word is a valid encoding. Hence the configuration can be encoded by an integer in $[1..\binom{\ell+r}{r}]$. Note also that the degree of $f$ is $2\ell+2r+6$ in all cases (in particular, all inner faces of $X$ have degree at least $6$).
 \dotfill
 \end{rk}

\section{Bijections with walks in the quadrant}\label{bij-tandem}

Similarly as in~\cite{Na20}, once our models have been set in bijection to certain models of plane bipolar orientations, they can be set in bijection to specific quadrant walks by specializing a bijection due to Kenyon, Miller, Sheffield and Wilson (shortly called the KMSW bijection), which we use as a bijective black box. An example of the KMSW bijction is given in Figure~\ref{fig:exemple_kmsw}.

%We recall their bijection in Section~\ref{}
%(with an emphasis on the bipartite case), apply them to the two models in Section~\ref{}, and then explain in Section~\ref{} how the bijection for 3-connected Schnyder labelings 

\subsection{KMSW bijection}

A \emph{tandem walk} is a walk on the lattice $\mathbb{Z}^2$, with steps in $\{(1,-1)\}\cup\{(-i,j)|\ i,j\geq 0\}$. A step that is not a SE step (i.e., a step of the form $(-i,j)$) is called a \emph{face-step}. 
%, its \emph{type} is the integer pair $(i,j)$. 
%The step $(-i,j)$ is said to have type $(i,j)$. 

\begin{thm}[\cite{kenyon2019bipolar}]
There is a bijection between plane bipolar orientations of outer type $(d,d')$ and tandem walks from $(0,d)$ to $(d',0)$ staying in the quadrant $\mathbb{N}^2$. For $X$ a plane bipolar orientations and $\pi$ the corresponding tandem walk, the number of edges of $X$ corresponds to one plus the length of $\pi$, each inner face of type $(i,j)$ in $X$ corresponds to a face-step $(-i,j)$ in $\pi$, and each non-pole vertex corresponds to a SE step of $\pi$. 
\end{thm}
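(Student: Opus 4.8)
The plan is to follow the explicit construction of Kenyon--Miller--Sheffield--Wilson; since we use the statement only as a black box, a sketch of why it is a parameter-preserving bijection suffices. Draw $X$ with source $S$ at the bottom, sink $N$ at the top, and the outer face unbounded, and produce $\pi$ by a left-to-right \emph{sweep}. One maintains a \emph{frontier} which at every stage is a directed $S$--$N$ path made of edges of $X$: it starts as the leftmost $S$--$N$ path (of length $d+1$, by the definition of outer type) and ends as the rightmost one (of length $d'+1$); in between it advances across $X$, at each move either absorbing a non-pole vertex --- emitting an SE step $(1,-1)$ --- or sweeping past exactly one inner face $f$ of type $(i,j)$, which replaces the left lateral path of $f$ (lying on the current frontier) by its right lateral path and emits the face-step $(-i,j)$. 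The local characterization (B)--(B') guarantees that exactly one move is available at each stage and that every non-pole vertex and every inner face is visited exactly once; hence $\pi$ has length $(\#\text{non-pole vertices})+(\#\text{inner faces})$, which by Euler's relation is $(\#\text{edges})-1$. The two coordinates of $\pi$ admit a combinatorial reading as edge-counts along two portions of the current frontier (split by a moving cursor); since the frontier is genuinely a directed $S$--$N$ path at every stage --- this uses acyclicity of $X$ together with (B)--(B') --- these counts are nonnegative, so $\pi$ stays in $\mathbb{N}^2$, and it runs from $(0,d)$ to $(d',0)$ by the initial and final frontiers.

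For the inverse, given a quadrant tandem walk $\pi$ from $(0,d)$ to $(d',0)$, one rebuilds $X$ by reading the steps left to right while maintaining the frontier: begin with the length-$(d+1)$ path, then for each SE step graft a new non-pole vertex and for each face-step $(-i,j)$ glue a new inner face of type $(i,j)$ to the right of the frontier, each operation being the exact reverse of the corresponding sweep move. The quadrant condition is precisely what makes the prescribed move always performable (there are always enough frontier edges in the relevant portion), and the endpoint condition forces termination on the rightmost outer path, so the output is a plane bipolar orientation of outer type $(d,d')$: rules (B) and (B') hold by how vertices and faces were grafted, and acyclicity holds because each operation attaches new material strictly to the right of an $S$--$N$ path, which cannot create a directed cycle. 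That the two maps are mutually inverse follows by induction on the step index: the $t$-th frontier obtained by sweeping $X$ coincides with the $t$-th frontier built from $\pi$, since the sweep move and the grafting move are inverse to one another. The parameter correspondence --- edges versus length $+1$, inner faces of type $(i,j)$ versus face-steps $(-i,j)$, non-pole vertices versus SE steps --- is then read directly off the construction.

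The main obstacle is the global invariant rather than any single local move: one must show that the sweep of $X$ never stalls and terminates having consumed all of $X$, equivalently that the frontier is at every stage a directed $S$--$N$ path whose two cursor-portions are well defined and whose edge-counts are exactly the walk coordinates; and symmetrically that the quadrant and endpoint constraints on $\pi$ are exactly what the reconstruction requires. Both are proved from the local rules (B)--(B') and acyclicity and form the technical heart of the KMSW argument, while everything else (step shapes, outer type, parameter counts) is immediate bookkeeping.
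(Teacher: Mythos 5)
The paper does not prove this statement: it is imported from \cite{kenyon2019bipolar} and used explicitly as a ``bijective black box,'' the only indication of the construction being the caption of Figure~\ref{fig:exemple_kmsw}, which describes it via the rightmost ingoing spanning tree and a clockwise contour traversal (first encounter of a tree edge emits an SE step, first entry into a face of type $(i,j)$ emits $(-i,j)$). Your sketch uses a different but equivalent formulation of the same bijection, the ``frontier sweep'': maintain a directed $S$--$N$ path peeling $X$ from its leftmost outer path to its rightmost, each move absorbing a non-pole vertex (SE step) or crossing one inner face (face-step). Both pictures describe the same map and can each be turned into a proof; your bookkeeping of lengths, endpoints, step shapes, and the Euler count of steps is correct. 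The one substantive claim --- that the frontier is genuinely a directed $S$--$N$ path at every stage, that exactly one move is available, and that the two coordinates, read off as edge counts of the two cursor portions, stay nonnegative --- is asserted rather than proved, and you flag this yourself as the technical heart of the KMSW argument. Since the paper supplies no proof at all, your sketch actually goes somewhat beyond what the paper offers and contains no errors; a complete verification would require pinning down the exact definition of the two walk coordinates and carrying out the invariant argument from (B)--(B') and acyclicity.
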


\begin{rk}
The bijection is easy to specialize to the bipartite setting (we will use the bijection in this setting only). A plane bipolar orientation $X$ is bipartite iff in the corresponding walk, each face-step $(-i,j)$ is such that $i+j$ is even; such a tandem walk is called \emph{even}. Moreover, the non-pole white and black vertices of $X$ correspond to the SE steps that start at even $y$ and odd $y$, respectively (this is due to the property that the $y$ where the step starts indicates a path-length in $X$ between $N$ and the vertex corresponding to the step).   Similarly, whitetip inner faces and blacktip inner faces correspond to face-steps that start at even $y$ and at odd $y$, respectively, see Figure~\ref{fig:exemple_kmsw} for an example. \dotfill
\end{rk}
\begin{figure}[t]
    \begin{center}
        \includegraphics[width=0.8\textwidth]{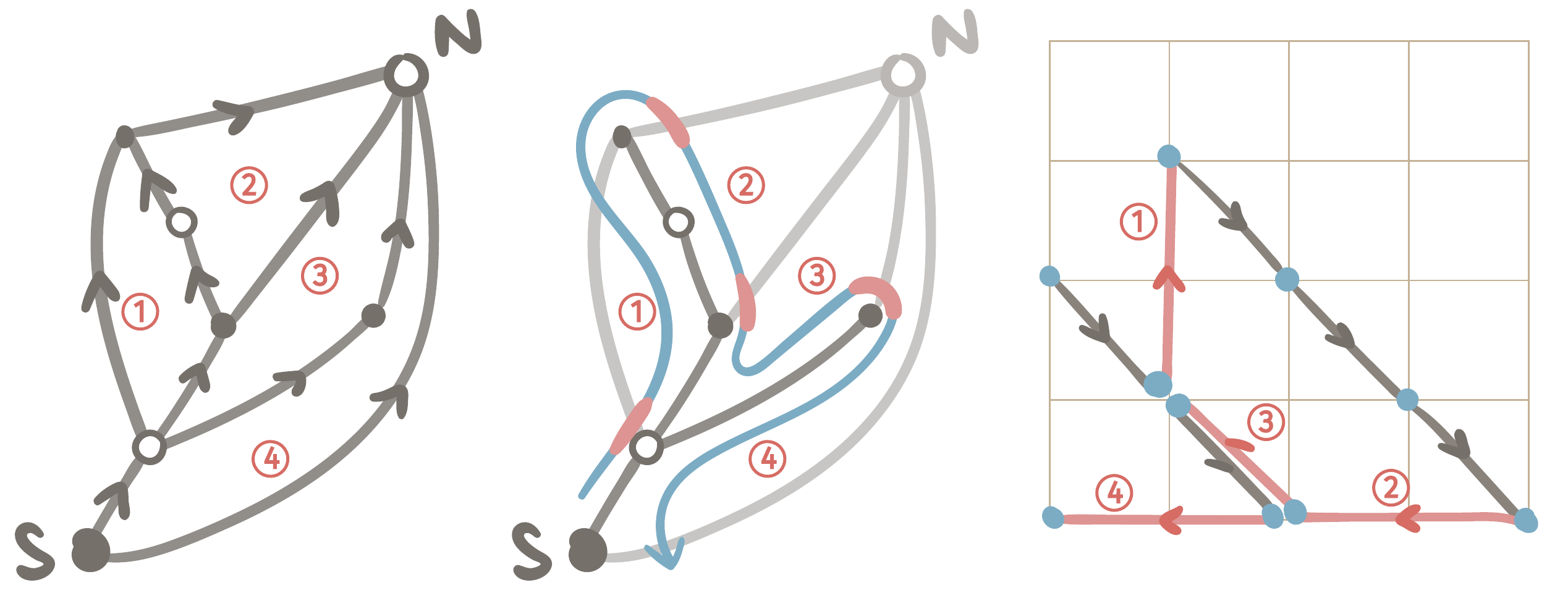}
    \end{center}
\vspace{-1em}
    \caption{A bipartite plane bipolar orientation (left), and its rightmost ingoing tree (middle), excluding the vertex~$N$. The corresponding even tandem walk (right) is obtained by revolving around the tree in clockwise order: when first seeing an edge, it yields a~$SE$ step, and when first entering a face of type $(i,j)$, it yields a face-step $(-i,j)$.}
    \label{fig:exemple_kmsw}
\vspace{-1em}
\end{figure}

\subsection{Application to the two models}\label{subsec:application}

We first specialize the KMSW bijection (in the bipartite setting) to the $P$-admissible bipolar orientations.  
A \emph{$P$-admissible tandem walk} is an even tandem walk where  every face-step $(-i,j)$ starting at even (resp. odd) $y$ has $j\geq 1$ (resp. $i\geq 1$). 
Via Proposition~\ref{claim:poly} we obtain:

\begin{prop}\label{prop:poly_tandem}
For $n\geq 1$, polyhedral orientations with $n$ inner vertices, among which $a$ are red, $b$ are blue, and $c$ are green, are in bijection with $P$-admissible tandem walks in the quadrant that start at the origin, end on the $x$-axis, and have $n$ steps, among which $a$ are SE steps starting at even $y$, $b$ 
are SE steps starting at odd $y$, and $c$ are face-steps. 
\end{prop}

We then specialize the KMSW bijection to the S-transverse bipolar orientations. For this (given Remark~\ref{rk:transvers}), we need a weighted terminology: a step $s$ in a tandem walk is said to be \emph{weighted} by $w\in\mathbb{N}$ if $s$ comes with an integer in $[1..w]$ (for the enumeration, the weights of the steps composing the walk have to be multiplied, those where no weight is indicated are implicitly assumed to have weight $1$). An \emph{$S$-admissible tandem walk} is defined as an even tandem walk such that every face-step $(-i,j)$ with even entries is of the form $i=2\ell+2,j=2r+2$ and is weighted by $\binom{\ell+r}{r}$,  every face-step $(-i,j)$ with odd entries and starting at even $y$ is of the form $i=2\ell+1,j=2r+3$ and is weighted by $\binom{\ell+r}{r}$, and  every face-step $(-i,j)$ with odd entries and starting at odd $y$ is of the form $i=2\ell+3,j=2r+1$ and is weighted by $\binom{\ell+r}{r}$.    
Via Proposition~\ref{claim:schnyder} and Remark~\ref{rk:transvers}, we obtain:

\begin{prop}\label{prop:schnyder-tandem}
For $n\geq 1$, Schnyder labelings with $n$ inner faces, $a+2$ white vertices and $b+2$ black vertices ($n=a+b$), and whose two outer~$G$ vertices are non-isolated, are in bijection with $S$-admissible tandem walks in the quadrant that start  at $(0,2)$, end at $(2,0)$, and have  $n+2$ SE steps, among which $a+1$ start at even $y$ and $b+1$ start at odd $y$. 
\end{prop}

\subsection{Specialization of Proposition~\ref{prop:schnyder-tandem} to triangulations}\label{sec:triangul}
It is known that Schnyder labelings on \emph{triangulations} with $n+3$ vertices are in bijection to non-crossing pairs of Dyck walks of length $2n$~\cite{bernardi2009intervals}. 
We explain
here how this bijective result can be recovered from Proposition~\ref{prop:schnyder-tandem}.  
%Let $\cS_{a,b}$ be the set of Schnyder labelings with $a+1$ white vertices and $b+1$ black vertices, and let $\cS_{a,b}'$ 
%be the subset of those with no outer $G$ isolated. 
We define a \emph{1-aligned} tandem walk as a tandem walk such that all face-steps $(-i,j)$ have $j=1$. Let $\cA_n$ be
the set of 1-aligned quadrant tandem walks starting at the origin and ending on the $x$-axis, with $n$ SE steps (note that the number of face-steps must also be $n$, and the length is $2n$). 

\begin{figure}
    \begin{center}
        \includegraphics[width=\textwidth]{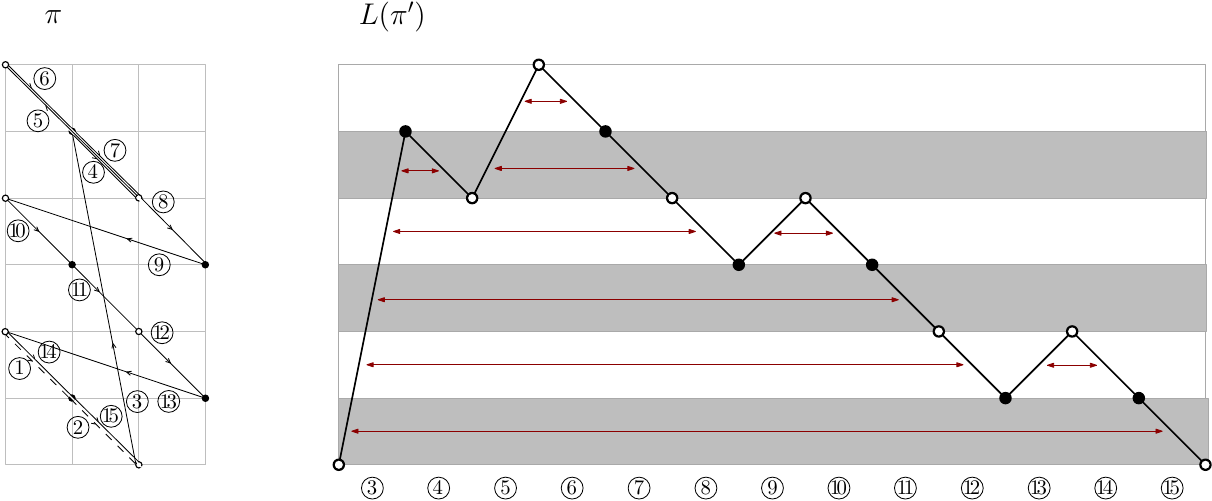}
    \end{center}
    \caption{An $S$-admissible quadrant tandem walk from $(0,2)$ to $(2,0)$ (without taking weights into account), and the corresponding Łukasiewicz excursion $L(\pi')$ (where $\pi'$ is $\pi$ without the first two steps). The initial 
    step $s$ of $L(\pi')$ has $a(s)=2$, $b(s)=3$, and $\delta(s)=2a(s)-b(s)=1$.}
    \label{fig:Luka}
\end{figure}
 
\begin{lem}\label{lem:bijS}
The family $\cS_{a,b}'$ is empty for $b>2a-1$. For $b=2a-1$, it is in bijection with $\cA_{a-1}$.
\end{lem}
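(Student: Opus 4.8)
The plan is to analyze the $S$-admissible tandem walks produced by Proposition~\ref{prop:schnyder-tandem} and see what the constraint "triangulation" / "$b=2a-2$" imposes on them. Recall that an $S$-admissible walk associated to a Schnyder labeling in $\cS_{a,b}'$ goes from $(0,2)$ to $(2,0)$ and has $a$ SE steps starting at even $y$ and $b$ SE steps starting at odd $y$. The first observation I would make is a parity/height-bookkeeping inequality: each SE step starting at odd $y$ contributes to the walk in a way that is controlled by the SE steps starting at even $y$ and by the face-steps, because the $y$-coordinate only takes nonnegative values and starts and ends at small even values. Concretely, I would track the walk restricted to its behavior between consecutive even levels, or better, count the net vertical displacement: the total rise from face-steps must balance the total drop from SE steps ($-a-b$ from SE steps, $+\sum j$ from face-steps, with a net of $-2$). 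Combined with the $S$-admissibility shape constraints on face-steps — a face-step with even entries has $j=2r+2\ge 2$, one with odd entries starting at even $y$ has $j=2r+3\ge 3$, one with odd entries starting at odd $y$ has $j=2r+1\ge 1$ — one extracts that $b\le 2a-2$, with equality forcing every face-step to be "as small as possible." I would make this inequality precise by a careful level-crossing argument: the walk starts at height $2$ and must return to height $2$ at the end minus... let me instead phrase it as: read off that the number of down-steps from odd heights is bounded by (roughly) twice the number of up-crossings, which are supplied by even-$y$ SE steps plus face-step rises.

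The second, and I expect main, step is to show that $b=2a-2$ forces the walk to be "1-aligned after a simple transformation" and to construct the bijection with $\cA_{a-2}$. In the equality case every face-step must be extremal: face-steps with even entries have $r=0$ so they are $(-2\ell-2,2)$; face-steps of the two odd types have $r=0$ as well, giving $(-2\ell-1,3)$ and $(-2\ell-3,1)$; and moreover (from the tightness of the level-crossing count) no SE step starting at odd $y$ may be "wasted", which should pin down the vertical structure so rigidly that the walk alternates between even and odd levels in a forced pattern — effectively the walk lives on heights $\{0,1,2,3\}$ or is otherwise forced to be thin. The key bijective idea is then to pair each odd-height SE step with a neighboring structure and contract, or to reinterpret the pairs (even-$y$ SE step, odd-$y$ SE step) together with the surrounding face-steps as a single SE step plus a single $j=1$ face-step of a 1-aligned walk on $\cA_{a-2}$; the binomial weights $\binom{\ell+r}{r}$ all collapse to $\binom{\ell}{0}=1$ in the equality case, which is exactly why the target family $\cA_{a-2}$ is unweighted and matches the classical non-crossing-pair-of-Dyck-walks count. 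I would verify the step counts: $a$ even-$y$ SE steps and $b=2a-2$ odd-$y$ SE steps and some face-steps map to $a-2$ SE steps and $a-2$ face-steps of a length-$2(a-2)$ 1-aligned walk, with the endpoints $(0,2),(2,0)$ mapping to $(0,0)$ and a point on the $x$-axis.

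For emptiness when $b>2a-2$, the inequality from the first step does all the work: no $S$-admissible walk with these statistics can stay in the quadrant, so $\cS_{a,b}'=\varnothing$. The hardest part will be the equality-case rigidity: turning "the level-crossing inequality is tight" into a clean combinatorial normal form for the walk, and then packaging the contraction/expansion as an explicit pair of mutually inverse maps between $S$-admissible walks from $(0,2)$ to $(2,0)$ with $b=2a-2$ and walks in $\cA_{a-2}$ — in particular checking that the quadrant constraint is preserved in both directions and that the weight $1$ is genuinely forced (i.e. all $\ell,r$ arising are such that the binomial is $1$, which here means $r=0$). Once the normal form is established the bijection itself should be a short, essentially local, rewriting of step types, and the statistic count is a routine check.
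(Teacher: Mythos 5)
Your overall strategy---identify $\cS_{a,b}'$ with $S$-admissible quadrant walks from $(0,2)$ to $(2,0)$ via Proposition~\ref{prop:schnyder-tandem}, project to the $y$-axis to get a Lukasiewicz-type nonnegative walk, extract an inequality $b\le 2a-2$ from the interplay between parities of SE-step heights and the shape constraints on face-steps, and then build the bijection with $\cA_{a-2}$ in the equality case---is exactly the one the paper follows. However, your account of the equality case contains a concrete error that would block you from carrying out the bijection, and the crucial inequality is only asserted heuristically.

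On the inequality: the paper associates to each rising step $s$ of the projected walk the downsteps ``closed off'' by it, splits them into $a(s)$ even-height and $b(s)$ odd-height ones, and introduces the weight $\delta(s)=2a(s)-b(s)$, proving $\delta(s)\ge 0$ pointwise (with equality only for even $+3$ rising steps). Summing gives $\Delta=2(a-1)-(b-1)\ge 1$ once one checks the last rising step, whose $\delta$ is $1$ or $2$. Your sketch (``number of down-steps from odd heights is bounded by roughly twice the number of up-crossings'') is in the right direction but never becomes a proof; without the per-rising-step bookkeeping you cannot pin down the equality case.

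On the equality case: you claim all three face-step types occur with $r=0$, and that the walk is effectively thin (lives on heights $\{0,1,2,3\}$). Both claims are false. When $b=2a-2$, the tightness analysis forces \emph{every} face-step to start at even $y$ and to have $j=3$ (i.e.\ of the form $(-2\ell-1,3)$), except the very last, which has $j=2$ (of the form $(-2\ell-2,2)$). In particular, no face-step of type $(-2\ell-3,1)$ starting at odd $y$ can occur. And the walk is not thin: under the paper's bijection, a walk $\varpi\in\cA_{a-2}$ climbing to height $h$ maps to a walk climbing to height roughly $2h+1$, which can be as large as one pleases. If you tried to build the bijection from a ``thin normal form'' as you describe, it would fail for all but the smallest walks. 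The correct local rewriting is: each SE step of $\varpi$ $\mapsto$ two consecutive SE steps of $\pi$; each face-step $(-i,1)$ of $\varpi$ $\mapsto$ a face-step $(-2i-1,3)$ followed by one SE step; with two SE steps prepended and $(-2d'-2,2),\mathrm{SE},\mathrm{SE}$ appended if $\varpi$ ends at $(d',0)$.

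So: right approach, same as the paper's, but the equality-case normal form is mischaracterized and the inequality is not proved; filling both gaps is the substance of the lemma.
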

\begin{proof}
%For $X\in\cS_{a,b}'$ every inner black vertex and the outer black vertex labeled $G$ have degree at least $3$, while the two other outer black vertices have degree at least $2$. 
%Moreover, by the Euler relation, the number of edges is $2a+2b-1$. Hence, $3(b-1)+4\leq 2a+2b-1$, giving $b\leq 2a-2$. In the extremal case $b=2a-2$, all the inequalities have to be tight.
By Proposition~\ref{prop:schnyder-tandem}, $\cS_{a,b}'$ is in bijection with the set of $S$-admissible quadrant tandem walks from $(0,2)$ to $(2,0)$ having $a+1$ (resp. $b+1$) SE steps starting at even (resp. odd) $y$.  Let $\pi$ be such a walk. 
 An easy case inspection ensures that the first two steps of $\pi$ 
have to be SE steps (as there is no step $(0,j)$, nor step $(-1,j)$ starting at odd $y$).  
Let $\pi'$ be $\pi$ deprived from the first two steps, hence $\pi'$ starts and ends at $(2,0)$.   
Let $L(\pi')$ be the (directed, 2d) walk with steps in $\{(1,j),\ j\geq -1\}$ obtained by the $y$-projection of $\pi'$, i.e., 
every face step $(-i,j)$ of $\pi'$ becomes a step $(1,j)$ in $L(\pi')$, and SE steps are unchanged. 
Clearly  $L(\pi')$ starts and ends on the $x$-axis and stays in the half-plane $\{y\geq 0\}$, i.e., it is a Łukasiewicz excursion; note also that the SE steps of $\pi'$
correspond to the down-steps of $L(\pi')$, while the face steps of $\pi'$ correspond to the rising steps of $L(\pi')$ (since $\pi$ is $S$-admissible, it has no horizontal  face step).    
 A step of $L(\pi')$ is called even or odd whether it starts at even or odd $y$. By the properties of $\pi$,     
 $L(\pi')$ has $a$ even downsteps and $b$ odd downsteps, and each rising step $s=(k,1)$ in $L(\pi')$ satisfies $k\geq 2$ if $s$ is even, and  $k\geq 1$ if $s$ is odd.
  
Note that each downstep of $L(\pi')$ from ordinate $y$ to $y-1$
is classically associated with the closest preceding rising step covering these ordinates, as illustrated in Figure~\ref{fig:Luka}. For a rising step $s$, we let $a(s)$ and $b(s)$ be respectively
the numbers of even and odd downsteps associated with $s$ (so that $|a(s)-b(s)|\leq1$), and we let $\delta(s)=2a(s)-b(s)$. 
Given the above condition on rising steps of $L(\pi')$, it is easy to see that $\delta(s)\geq 0$, and that $\delta(s)=0$ iff $s$ is an even
 $+3$ step. Moreover, an easy case inspection ensures that $\pi$ has to end with two SE steps, which have to be preceded by a step of the form $(-2i-3,1)$ or $(-2i-2,2)$ for some $i\geq 0$. The corresponding rising step $s$ in $L(\pi')$ has $a(s)=1,b(s)=0$ (and $\delta(s)=2$) in the first case, and $a(s)=1,b(s)=1$ (and $\delta(s)=1$) in the second case. 
 Let $\Delta$ be the sum of $\delta(s)$ over the rising steps of $L(\pi')$. Note that $\Delta=2a-b$. Moreover, the preceding discussion ensures that $\Delta\geq 1$ (i.e., $b$ must be at most $2a-1$, as claimed),     
 and $\Delta=1$ (extremal case $b=2a-1$) 
 iff all rising steps are even $+3$ steps, except for the last rising step that is an even $+2$ step starting from the horizontal axis.   
 We conclude that if $\pi$ is counted by $s_{a,2a-1}'$ then it starts with two SE steps, all its face-steps $(-i,j)$ start at even $y$,  and they have $j=3$ except for the last one that has $j=2$ to reach the point $(0,2)$ (before the final two SE steps to reach $(2,0)$).   
 Note also that all face-steps in such a walk must have weight $1$.    
  
From $\varpi\in\cA_{a-1}$, it is easy to produce such a walk $\pi$ as follows (we specify it by the sequence of steps, and the starting point, that has to be $(0,2)$): turn each SE step into two successive SE steps (combined, these have the effect of a step $(2,-2)=2*(1,-1)$), and turn each step $(-i,1)$ into two consecutive steps: a step $(-2i-1,3)$ followed by a SE step (combined, these have the effect of a step $(-2i,2)=2*(-i,1)$). Finally, if the walk ends at $(d',0)$, append the three steps $(-2d'-2,2), \mathrm{SE}, \mathrm{SE}$, prepend two SE steps, and choose the starting point at $(0,2)$. By construction, the mapping is such that every face-step $(-i,1)$ starting from $(x,y)$ in $\varpi$ becomes a face-step $(-2i-1,3)$ starting from $(2x+2,2y)$ in the corresponding walk $\pi$.  
It is also easy to check that the mapping is bijective (every walk with $\Delta=1$ is uniquely obtained in that way). 
\end{proof}

\begin{rk}\label{rk:schnyderT1}
Another way to see that $\cS_{a,b}'$ is empty for $b>2a-1$ is to argue via vertex-degrees. For $D\in\cS_{a,b}'$ every inner black vertex and the outer black vertex labeled $G$ have degree at least $3$, while the two other outer black vertices have degree at least $2$. 
Moreover, by the Euler relation, the number of edges is $2a+2b+3$. Hence, $3b+4\leq 2a+2b+3$, giving $b\leq 2a-1$. In the extremal case $b=2a-2$, all the inequalities have to be tight. Similarly, for $\cS_{a,b}$ the inequalities are the same, except that the black outer $G$ is allowed to have degree $2$. Hence, the extremal case is $b=2a$, in which case all inner black vertices have degree $3$, and the outer black vertices have degree $2$. There is an easy bijection from $\cS_{a,2a}$ to $\cS_{a,2a-1}'$ that  consists in removing the outer black vertex $G$ and its two incident edges.  Moreover, via the mapping mentioned in Remark~\ref{rk:corner_map}, $\cS_{a,2a}$ 
corresponds to the family $\cT_{a-1}$ of Schnyder labelings on triangulations with $a+2$ vertices.
\dotfill
\end{rk}

\begin{rk}\label{rk:schnyderT2}
The bijective link between $\cT_n$  and $\cA_n$ can also be established via the KMSW bijection. Indeed, the plane bipolar orientations corresponding to 1-aligned tandem walks ending on the $x$-axis are those  having a left outer boundary of length $1$, and such that every inner face has a right boundary of length $2$; and there is an easy bijection between $\cT_n$ and  such orientations with $n+2$ vertices~\cite[Sec.5]{fusy2009bijective}.  The resulting link between $\cT_n$  and $\cA_n$  via maps is more direct, but the interest of Lemma~\ref{lem:bijS} is that it establishes the result by sole inspection of the properties of $S$-admissible tandem walks. 
\dotfill
\end{rk}

A \emph{Dyck walk} of length $2n$ is a walk  from the origin to $(n,n)$ with steps in $\{E,N\}$, staying in the region $\{y\geq x\}$.  A pair $(D,D')$ of Dyck walks of length $2n$ is called \emph{non-crossing} if for every $y\in[0..n-1]$ the unique $N$ step starting at height $y$ in $D'$ is weakly left of the unique $N$ step starting at height $y$ in $D$. We now recover the following well-known bijective correspondence~\cite{bernardi2009intervals}:

\begin{figure}
    \begin{center}
        \includegraphics[width=0.8\textwidth]{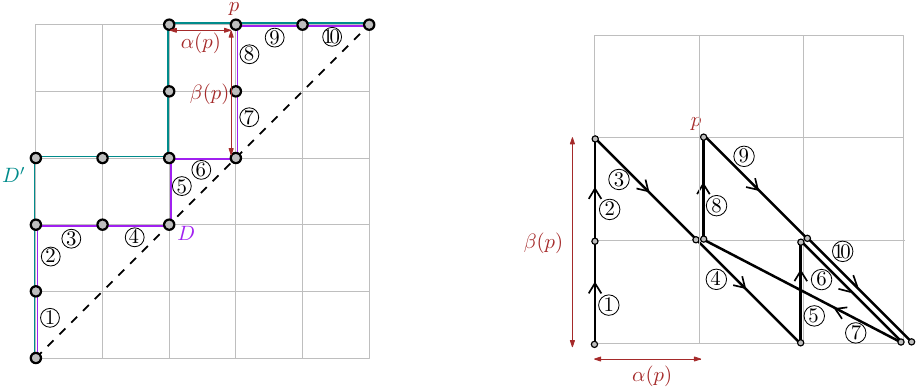}
    \end{center}
    \caption{A non-crossing pair of Dyck walks of length $2n$ ($n=5$), and the corresponding 1-aligned quadrant tandem walk in $\cA_n$.}
    \label{fig:dyck}
\end{figure}

\begin{prop}
For $n\geq 0$, Schnyder labelings on triangulations with $n+3$ vertices are in bijection
with non-crossing pairs of Dyck walks of length $2n$. 
\end{prop}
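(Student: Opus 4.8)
## Proof proposal

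The plan is to combine Lemma~\ref{lem:bijS}, the reduction in Remark~\ref{rk:schnyderT} passing from $\cS_{a,2a-1}$ (equivalently $\cT_{a-2}$) to $\cS_{a,2a-2}'$, and an explicit reinterpretation of $1$-aligned quadrant tandem walks as non-crossing pairs of Dyck walks. Concretely, fix $n$ and set $a=n+2$. By Remark~\ref{rk:schnyderT}, Schnyder labelings on triangulations with $n+3$ vertices form the family $\cT_n$, which is in bijection with $\cS_{a,2a-1}$ (remove the outer black $G$ vertex and its two incident edges), and this in turn is in bijection with $\cS'_{a,2a-2}$ by the same remark. By Lemma~\ref{lem:bijS}, $\cS'_{a,2a-2}$ is in bijection with $\cA_{a-2}=\cA_n$, the set of $1$-aligned quadrant tandem walks starting at the origin and ending on the $x$-axis with $n$ SE steps (and $n$ face-steps, total length $2n$). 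So it suffices to exhibit a bijection between $\cA_n$ and non-crossing pairs of Dyck walks of length $2n$.

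The main step is this last bijection. A walk $\varpi\in\cA_n$ is a sequence of $2n$ steps, $n$ of them $\mathrm{SE}=(1,-1)$ and $n$ of them of the form $(-i,1)$ with $i\ge 0$, staying in $\mathbb{N}^2$, starting at $(0,0)$ and ending on the $x$-axis. I would track the two coordinates separately. The $y$-coordinate: each face-step raises $y$ by $1$ and each SE step lowers it by $1$; since there are $n$ of each and the walk stays in $y\ge 0$ and returns to $y=0$, reading "face-step $\mapsto N$, SE step $\mapsto E$" gives a Dyck path $D'$ of length $2n$ (the constraint $y\ge0$ is exactly $y\ge x$ after the relabeling). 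The $x$-coordinate: it increases by $1$ at each SE step and decreases by $i\ge 0$ at each face-step $(-i,1)$; staying in $x\ge0$ controls how large the $i$'s can be. The natural encoding is to record, for the $k$-th face-step (say it occurs when the partial counts are $p$ SE steps and $k-1$ prior face-steps, so the current abscissa is $p-\sum_{\text{prior}}i_\ell$), the value $i_k\in[0..\,x\text{ before the step}]$. This is precisely the data of a second lattice path $D$ whose $N$ steps (one per face-step, i.e.\ one per height $y\in[0..n-1]$ in $D'$) are placed so that the $N$ step at height $y$ in $D$ lies weakly right of the one at height $y$ in $D'$ — the abscissa condition $x\ge 0$ translating into exactly the non-crossing inequality, and $D$ itself being a Dyck walk because the total $x$-displacement is $0$ (the walk ends on the $x$-axis... one must be slightly careful: it ends at $(d',0)$ with $d'\ge0$, so $\sum i_k=n-d'$; I would either incorporate $d'$ into the final segment as in the proof of Lemma~\ref{lem:bijS}, or observe that $d'$ is recovered as the horizontal distance between the endpoints of $D$ and $D'$, giving the correct pair of Dyck walks of common length $2n$).

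I expect the bookkeeping around the endpoint to be the fiddly part: in Lemma~\ref{lem:bijS} the walk $\varpi\in\cA_{a-2}$ is allowed to end anywhere on the $x$-axis, and the appended segment $(-2d'-2,2),\mathrm{SE},\mathrm{SE}$ in the corresponding $\pi$ is what makes the Schnyder count come out. So in stating the bijection $\cA_n\leftrightarrow$ non-crossing Dyck pairs I must decide how the "leftover" abscissa $d'$ at the end of $\varpi$ shows up on the Dyck side — it is the horizontal offset between the right endpoint of $D$ and the right endpoint of $D'$ — and check that all such offsets $d'\in[0..n]$ are realized, so that the correspondence is onto the full set of non-crossing pairs. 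Once the $\cA_n\leftrightarrow$ Dyck-pairs bijection is pinned down with the endpoints handled correctly, the proposition follows by composing it with the chain $\cT_n\leftrightarrow\cS_{a,2a-1}\leftrightarrow\cS'_{a,2a-2}\leftrightarrow\cA_n$ established above. I would also remark that this recovers Bernardi–Bonichon's correspondence, and that the only genuinely new ingredient is that everything is extracted purely from the combinatorics of $S$-admissible tandem walks via the KMSW black box.
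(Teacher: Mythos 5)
Your chain of reductions $\cT_n\leftrightarrow\cS_{a,2a-1}\leftrightarrow\cS'_{a,2a-2}\leftrightarrow\cA_n$ and the coordinate-projection bijection between $\cA_n$ and non-crossing pairs of Dyck walks are exactly what the paper does (the paper states it as a map $\phi$ from pairs $(D,D')$ to $\cA_n$, reading off the height of $D$ above the diagonal and the horizontal gap between $D$ and $D'$). One small slip in your sketch: with your labels, the quadrant constraint $x\ge 0$ forces the path you call $D$, whose $N$ steps sit at the prefix sums $\sum_{\ell\le k}i_\ell$, to lie weakly \emph{left} of the path you call $D'$, not weakly right; swapping the names to match the paper's convention fixes this, and it also makes clear that $D$ is automatically a Dyck path for any ending abscissa $d'$, so the endpoint bookkeeping you flagged resolves itself without any special treatment.
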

\begin{proof}
As we have seen in Remark~\ref{rk:schnyderT1}, $\cT_n$ identifies to 
 $\cS_{n+1,2n+1}'$. And this set is in bijection with $\cA_n$ (by Lemma~\ref{lem:bijS}, or alternatively Remark~\ref{rk:schnyderT2}).   
It remains to give a bijection between walks in $\cA_n$ and non-crossing pairs of Dyck walks of length $2n$.  The bijection is illustrated in Figure~\ref{fig:dyck}.   
For $\gamma=(D,D')$ a non-crossing pair of Dyck walks of length $2n$, and for each point $p=(x,y)$ on $D$ after the origin,
let $\alpha(p)$ be the horizontal distance between $p$ and the $N$ step of $D'$ arriving at height $y$, and let $\beta(p)$ be the vertical distance between $p$ and the diagonal $\{x=y\}$. With $p_1,\ldots,p_{2n}$ the sequence of points of $D$ after the origin, we define $\phi(\gamma)$ as the quadrant walk starting from the origin and visiting successively the points $(\alpha(p_i),\beta(p_i))$ for $i$ from $1$ to $2n$. Clearly, $\phi(\gamma)$ ends on the $x$-axis (since $\beta(p_{2n})=0$). It is also easy to see that each east step in $D$ yields a SE step in $\phi(\gamma)$, while
each north  step in $D$ yields a face-step that increases $y$ by $1$. Hence, $\phi(\gamma)$ is in $\cA_n$. 
The mapping $\phi$ is easy to invert, hence gives a bijection. 
\end{proof}

\section{Enumerative results}\label{enum}
\subsection{Exact enumeration}\label{enumexact}
A system of two equations with two catalytic variables $x$, $y$ can
easily be written for the series $Q^e(t,x,y)$ and
$Q^o(t,x,y)$ of $P$-admissible tandem walks staying in the quadrant, with even or odd final $y$
positions, along the lines for instance of \cite[Thm 3]{Beat20} (see also~\cite{buchacher2018inhomogeneous}), and
the same can be done for $S$-admissible tandem walks. The resulting
equations are however somewhat cumbersome to manipulate and it turns
out to be more efficient to reduce the problem to small step walk
problems, in the spirit of \cite[Prop. 4]{Na20}, but taking into
account the final $y$ parity.

We start with Schnyder labelings, which lead to simpler recurrences due to the weights on face-steps:
\begin{prop}\label{prop:exact_count1}
Let $s_n$ denote the number of Schnyder labelings with $n$ inner faces. 
%$s'_n$ the number of $S$-admissible tandem walk from $(0,2)$ to $(2,0)$ with $n+2$ SE-steps. 
%, $s_{i,j}$ those having $i+1$ white vertices and $j+1$ black vertices.
Let moreover
$s^{\searrow}_n(i,j)$, and 
$s^{\nwarrow}_n(i,j)$ be given by the following recurrences:
\begin{align}
  s^{\searrow}_n(i,j)&=s^{\searrow}_{n-1}(i-1,j+1)+s^{\nwarrow}_{n-1}(i-1,j+1)\label{eq1s}\\
  s^{\nwarrow}_n(i,j)&=(s^{\searrow}_{n}(i+2,j-2)+s^{\nwarrow}_{n}(i+2,j-2))+(s^{\searrow}_{n}(i+1,j-3)\label{eq2s}\\
  &\quad +s^{\nwarrow}_{n}(i+1,j-3))+(s^{\nwarrow}_{n}(i+2,j)+s^{\nwarrow}_{n}(i,j-2))\quad\textrm{ if $j$ is odd, }\nonumber\\
    &=(s^{\searrow}_{n}(i+2,j-2)+s^{\nwarrow}_{n}(i+2,j-2))+(s^{\searrow}_{n}(i+3,j-1)\nonumber\\
  &\quad +s^{\nwarrow}_{n}(i+3,j-1))+(s^{\nwarrow}_{n}(i+2,j)+s^{\nwarrow}_{n}(i,j-2))\quad\textrm{ if $j$ is even, }\nonumber
\end{align}
with null boundary conditions for all coefficients $s^*_n(i,j)$ with $n\leq0$ or $i<0$ or $i>n$ or $j<0$ except $s^{\searrow}_0(0,2)=1$.

Then, for $n\geq 4$, $s_n=s'_{n}+2s'_{n-1}+s'_{n-2}$ where  $s'_n=s^{\searrow}_{n+2}(2,0)$. 
%Then $s'_n=s^{\searrow}_{n+2}(2,0)$, and $s_n=s'_{n}+2s'_{n-1}+s'_{n-2}$ for $n\geq4$.
\end{prop}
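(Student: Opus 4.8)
The identity $s_n=s'_n+2s'_{n-1}+s'_{n-2}$ for $n\geq4$ is exactly Remark~\ref{rk:constraint}, so the content to establish is that $s'_m=s^{\searrow}_{m+2}(2,0)$ for $m\geq2$. The plan is to read the two arrays defined by \eqref{eq1s}--\eqref{eq2s} as transfer-matrix counts for a small-step quadrant-walk model obtained by unfolding the weighted face-steps of $S$-admissible tandem walks, and to match that model with Proposition~\ref{prop:schnyder-tandem}. First I would record what the latter gives: for $m\geq1$, $s'_m$ is the weighted number of $S$-admissible quadrant tandem walks from $(0,2)$ to $(2,0)$ with exactly $m+2$ SE steps, each walk counted with the product of the weights of its face-steps. (The requirement ``length at least $3$'' is automatic here, since such a walk already has at least $m+2\geq3$ steps; and because every $S$-admissible face-step $(-i,j)$ has $j\geq1$, such a walk cannot land on the $x$-axis with a face-step, so its last step is necessarily an SE step.)

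Next I would set up the unfolding: replace each face-step by one \emph{base} step followed by a shuffle of \emph{extension} steps $(-2,0)$ and $(0,2)$. Explicitly, an even-entry face-step $(-2\ell-2,2r+2)$ becomes $(-2,2)$ followed by $\ell$ copies of $(-2,0)$ and $r$ copies of $(0,2)$; an odd-entry face-step starting at an even (resp.\ odd) ordinate, which is necessarily of the form $(-2\ell-1,2r+3)$ (resp.\ $(-2\ell-3,2r+1)$), becomes $(-1,3)$ (resp.\ $(-3,1)$) followed by the same $\ell$ copies of $(-2,0)$ and $r$ copies of $(0,2)$; and SE steps are left unchanged. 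By Remark~\ref{rk:transvers} the weight $\binom{\ell+r}{r}$ of a face-step is precisely the number of shuffles of its $\ell+r$ extension steps, so this gives a bijection from weighted $S$-admissible quadrant tandem walks between two fixed endpoints with a fixed number of SE steps onto the lattice walks between the same endpoints with the same number of $(1,-1)$ steps, step set $\{(1,-1),(-2,2),(-1,3),(-3,1),(-2,0),(0,2)\}$, subject to: a $(-1,3)$ (resp.\ $(-3,1)$) step may start only at an even (resp.\ odd) ordinate; a $(-2,0)$ or $(0,2)$ step may occur only immediately after a base or an extension step; and all visited lattice points lie in $\mathbb{N}^2$. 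The quadrant condition transports correctly in both directions because inside one unfolded block the abscissa is non-increasing and the ordinate non-decreasing whichever shuffle is chosen, so the block lies in $\mathbb{N}^2$ as soon as its two endpoints do.

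For the arrays themselves, define $s^{\searrow}_n(i,j)$ (resp.\ $s^{\nwarrow}_n(i,j)$) to be the number of such small-step walks from $(0,2)$ to $(i,j)$ using $n$ steps $(1,-1)$ whose last step is an SE step (resp.\ a base or extension step), the empty walk being recorded by $s^{\searrow}_0(0,2)$. Then \eqref{eq1s} is the decomposition of a walk obtained by deleting its final SE step (which may follow either state, or be the first step), while \eqref{eq2s} and its $j$-even analogue are the decomposition obtained by deleting the final base or extension step, split according to the parity of the target ordinate $j$ (which determines which of the base steps $(-1,3)$, $(-3,1)$ is available and the exact shape of each case), base steps being allowed after either state and extension steps only after the $\nwarrow$-state. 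The listed null boundary conditions are consistent with this reading; in particular $i>n$ is impossible because no step other than $(1,-1)$ increases the abscissa. Hence these arrays count what is claimed, so $s'_m=s^{\searrow}_{m+2}(2,0)$ for $m\geq2$, and Remark~\ref{rk:constraint} then gives the stated formula.

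The step I expect to require the most care is the bijectivity of the unfolding: one must check that a small-step walk decomposes unambiguously into blocks — an SE step by itself, or a base step together with the maximal run of extension steps that follows it — that these blocks recover the original alternation of SE steps and face-steps even for two consecutive face-steps (the base of the second closing the block of the first), and that each face-step weight is reproduced exactly once, as the number of shuffles of its extension steps, with the quadrant constraint unaffected by the choice of shuffle.
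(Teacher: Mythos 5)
Your proof takes essentially the same route as the paper's: invoke Remark~\ref{rk:constraint} and Proposition~\ref{prop:schnyder-tandem} to reduce to counting weighted $S$-admissible quadrant tandem walks with a prescribed number of SE steps, unfold each face-step into a base step in $\{(-2,2),(-1,3),(-3,1)\}$ (with the parity constraint on which odd base step is allowed) followed by a shuffle of extension steps $(-2,0),(0,2)$ so the weight $\binom{\ell+r}{r}$ is absorbed as a shuffle count, impose that extension steps never follow an SE step, classify walks by the type of their last unfolded step, and read off \eqref{eq1s}--\eqref{eq2s} by last-step removal. The only differences are presentational (you make explicit the monotonicity argument for transport of the quadrant constraint within a block, the block-unambiguity of the inverse unfolding, and the reason $i\le n$ always holds), and these are correct and in the same spirit.
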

\begin{proof}
As we have seen in Remark~\ref{rk:constraint}, for $n\geq 4$ we have $s_n=s'_{n}+2s'_{n-1}+s'_{n-2}$, where $s_n'=|\cS_n'|$ is the number of Schnyder labelings with $n$ inner faces, and where 
the two outer $G$ vertices are non-isolated.  
  According to Proposition~\ref{prop:schnyder-tandem}, $s_n'$ is the total weight of quadrant $S$-admissible tandem walks from $(0,2)$ to $(2,0)$ that have $n+2$ SE steps, for $n\geq 1$. 

  Observe now that a (weighted) $S$-admissible tandem walk identifies
  with an unweighted tandem walk with step set
  $\Sigma=\{(1,-1),(-2,2),(-3,1),(-1,3),(-2,0),(0,2)\}$, starting with a step in $\Sigma\backslash\{(-2,0),(0,2)\}$, such that
  $(-1,3)$ steps (resp. $(-3,1)$ steps) always start from an even
  (resp. odd) $y$ position, and $\{(-2,0),(0,2)\}$-steps never follow
  $(1,-1)$ steps. Indeed, the weight on a face-step from $(x,y)$ to $(x'=x-i,y'=y+j)$ in an
  $S$-admissible tandem walk exactly corresponds to the number of ways
  to convert such a step into a walk from $(x,y)$ to $(x',y')$ starting
  with a step in $\{(-2,2),(-1,3)\}$ (resp. $\{(-2,2),(-3,1)\}$) if $y$ is even (resp. odd), and followed with a
  sequence of steps in $\{(-2,0),(0,2)\}$. 

  For $n\geq 1$, let $s_n^\searrow(i,j)$ (resp. $s_n^\nwarrow(i,j)$)
  denote the number of $S$-admissible quadrant tandem walks starting at position
  $(0,2)$, ending at position $(i,j)$ and with $n$ SE steps, whose associated unweighted tandem walk with step set
  $\Sigma$ ends with a SE step (respectively with a step of $\Sigma\setminus\{(1,-1)\}$). A last step removal decomposition of these
  unweighted walks then directly yields Equations
  \eqref{eq1s}--\eqref{eq2s}, upon setting $s^*_n(i,j)=0$ for $n\leq 0$, except for
  $s^\searrow_0(0,2)=1$, to ensure propre initialization (the walk should not start with a step in $\{(-2,0),(0,2)\}$).
  
  Finally, observe that an $S$-admissible quadrant tandem walk ending on the
    horizontal axis has to finish with a SE step,  hence $s'_n=s^{\searrow}_{n+2}(2,0)$. 
\end{proof}

The recurrence allows us (thanks to the boundary conditions) to compute the first $n$ terms in polynomial time, using $O(n^3)$ additions on integers of size $O(n)$.  
The first terms are 
\[
\sum_{n\geq2} s_nt^n=3t^2+2t^3+3t^4+6t^5+14\,t^6+36\,t^7+102\,t^8+306\,t^9+972\,t^{10}+3216\,t^{11}+O(t^{12}).
\]

\medskip

Proposition~\ref{prop:exact_count1} can be refined to take into account the number of
black and white vertices, since these quantities respectively
correspond to the numbers of iterations through
Eq~\eqref{eq1s} at even or odd values of $j$. 
%Thus, letting $s_{a,b}=|\cS_{a,b}|$ and $s_{a,b}'=|\cS_{a,b}'|$, we have (for $a\geq 3$ and $b\geq 3$)
% $s_{a,b}=s_{a,b}'+s_{a,b-1}'+s_{a-1,b}'+s_{a-1,b-1}'$, and (for $n\geq 1$)
For $a,b\geq 2$ we have  $s_{a,b}=s_{a,b}'+s_{a,b-1}'+s_{a-1,b}'+s_{a-1,b-1}'$, where (for $n\geq 1$) 
 $\sum_{a+b=n}s_{a,b}'u^av^b=\frac1{uv}s^{\searrow}_{n+2}(2,0)$, with $s^{\searrow}_{n+2}(2,0)$ obtained from the same recurrence as in Proposition~\ref{prop:exact_count1}, except that the right-hand side in Eq~\eqref{eq1s}
is to be multiplied by $u$ (resp. $v$) if $j$ is odd (resp. even). 
The first terms are
\begin{equation*}
\begin{split}
\!\!\sum_{a,b\geq 1} \!\!s_{a,b}u^av^bt^{a+b}\!=&\ 3uvt^2\!+\!(u^2v\!+\!uv^2)t^3\!+\!3u^2v^2t^4\!+\!(3u^3v^2\!+\!3u^2v^3)t^5+\!( {u}^{4}{v}^{2}\!+\!12{u}^{3}{v}^{3}\!+\!{u}^{2}{v}^{4} ) {t}^{6} \\&+ ( 18{u}^{4}{v}^{3}+18{u}^{3}{v}^{4} )\, {t}^{7}+ ( 12{u}^{5}{v}^{3}+78{u}^{4}{v}^{4}+12{u}^{3}{v}^{5} )\, {t}^{8} + O(t^{9}).
\end{split}
\end{equation*}
Pushing further the expansion, we recognize that 
 the coefficient of $u^{a}v^{2a}t^{3a}$ matches the number $\mathrm{Cat}_{a+1}\mathrm{Cat}_{a-1}-\mathrm{Cat}_{a}\mathrm{Cat}_{a}$ of non-crossing pairs of Dyck walks of length $2a-2$,  
 as expected from Section~\ref{sec:triangul}. This sequence starts as $1, 1, 3, 14, 84, 594,\ldots$ \cite[A005700]{OEIS}.
%in that expansion is as expected the well known number of triangulations endowed with a Schnyder wood~\cite{bernardi2009intervals}: 

\begin{rk}
We are also interested in counting
 Schnyder labelings whose outer white vertices are non-isolated, due to their link to rigid orthogonal surfaces, as discussed
 in Section~\ref{sec:3drep}.   
The counting sequence $\tilde{s}_n:=|\tilde{\cS}_n|$ can easily be obtained from the counting sequence $s_n$, by a similar argument as in Remark~\ref{rk:constraint}. The counting series is given by
\begin{align*}
\sum_{n\geq 2}\tilde{s}_nt^n&=\frac{1+3t+\sum_{n\geq 2}s_nt^n}{(1+t)^3}-1\\
&=t^3+3\,t^5+4\,t^6+15\,t^7+42\,t^8+131\,t^9+438\,t^{10} + 1467\,t^{11}+5204\,t^{12}+O(t^{13}).
\end{align*}

For bivariate enumeration, with $\tilde{s}_{a,b}:=|\tilde{\cS}_{a,b}|$, we have
\begin{align*}
&\sum_{a,b\geq 1}\tilde{s}_{a,b}u^av^bt^{a+b}=\frac{(1+3ut)v/u+\sum_{a,b\geq 1}s_{a,b}u^av^bt^{a+b}}{(1+ut)^3}-v/u\\
&\ \ \ \ \ \ \ =   {u}{v}^2{t}^{3}+3\,{u}^{2}{v}^{3}{t}^{5}+ ( 3\,{u}^{3}{v}^{3} +{u}^{2
}{v}^{4}) {t}^{6}+15\,{u}^{3}{v}^{4}{t}^{7}+
 ( 30\,{u}^{4}{v}^{4}+12\,{u}^{3}{v}^{5}) {t}^{8}+O(t^9).
\end{align*}
\end{rk}

\medskip

  Similarly, a recurrence can be obtained for polyhedral orientations,
  although 3 sequences are necessary, due to a further restriction on the
  set of admissible small steps walks to consider:
  \begin{prop}
  Let $p_n$ denote the number of polyhedral orientations with $n$ inner vertices.  Let moreover
  $p^{\searrow}_n(i,j)$,
  $p^{\leftarrow}_n(i,j)$, and
  $p^{\uparrow}_n(i,j)$ be given by the following recurrences:
  \begin{align}
    p^{\searrow}_n(i,j)&=p^{\searrow}_{n-1}(i-1,j+1)+p^{\nwarrow}_{n-1}(i-1,j+1)+p^{\uparrow}_{n-1}(i-1,j+1)\label{eq1p}\\
    p^{\nwarrow}_n(i,j)&=(p^{\searrow}_{n-1}(i+1,j-1)+p^{\nwarrow}_{n-1}(i+1,j-1)+p^{\uparrow}_{n-1}(i+1,j-1))\label{eq2p}\\
      &\quad+(p^{\searrow}_{n-1}(i,j-2)+p^{\nwarrow}_{n-1}(i,j-2)+p^{\uparrow}_{n-1}(i,j-2))\nonumber\\ 
      &\quad +(p^{\nwarrow}_{n}(i+2,j))\quad\textrm{if $j$ is even,}\nonumber\\
      &=(p^{\searrow}_{n-1}(i+1,j-1)+p^{\nwarrow}_{n-1}(i+1,j-1)+p^{\uparrow}_{n-1}(i+1,j-1))\nonumber\\
      &\quad+(p^{\searrow}_{n-1}(i+2,j)+p^{\nwarrow}_{n-1}(i+2,j)+p^{\uparrow}_{n-1}(i+2,j))\nonumber\\ 
      &\quad+(p^{\nwarrow}_{n}(i+2,j))\quad\textrm{if $j$ is odd,}\nonumber\\
    p^{\uparrow}_n(i,j)&=p^{\nwarrow}_{n}(i,j-2)+p^{\uparrow}_{n}(i,j-2)\label{eq3p}
  \end{align}
  with boundary conditions  $p^*_n(i,j)=0$ for all $n\leq0$ or $i<0$ or $j<0$ or $i>n$, except $p^{\searrow}_0(0,0)=1$.

  Then $p_n=\sum_{i\ge0}p^{\searrow}_{n}(i,0)$ for $n\geq 1$.
  \end{prop}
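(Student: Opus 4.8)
The plan is to repeat the argument of Proposition~\ref{prop:exact_count1}, the new feature being that the unweighted $P$-admissible walks call for one extra bookkeeping state. The identification of $p_n$ with the number of $P$-admissible quadrant tandem walks from the origin to the horizontal axis (of length $n-1$) is just the bijection between polyhedral orientations and $P$-admissible tandem walks, specialized as in the statement.

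First I would re-encode a $P$-admissible tandem walk as a walk with the fixed step set $\Sigma_P=\{(1,-1),(-1,1),(-2,0),(0,2)\}$, expanding each face-step $(-i,j)$ into a block of $\Sigma_P$-steps made of one \emph{opening} step followed by a run of $(-2,0)$'s and then a run of $(0,2)$'s (either run possibly empty). The opening step is dictated by the residues of $i$ and $j$ and of the starting ordinate, so as to reproduce condition (PA2): it is $(-1,1)$ when $i$ (hence $j$) is odd, and one of $(-2,0),(0,2)$ otherwise, the side-condition $j\ge1$ or $i\ge1$ on the starting ordinate being exactly what makes this choice legal and unique. The key point --- and the reason no weights are needed here, unlike in Proposition~\ref{prop:exact_count1} --- is that ``$(-2,0)$'s then $(0,2)$'s'' is a canonical order, so the expansion is a bijection from face-steps of the underlying bipolar orientation onto such $\Sigma_P$-blocks. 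Conversely, a $\Sigma_P$-walk encodes a $P$-admissible tandem walk exactly when it stays in the quadrant, every non-opening $(-2,0)$ or $(0,2)$ step is immediately preceded by a $\Sigma_P$-step different from $(1,-1)$ (the constraint already present in Proposition~\ref{prop:exact_count1}), and moreover no $(-2,0)$ immediately follows a $(0,2)$ inside a block (the additional restriction alluded to before the statement).

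To read off a recurrence by last-step removal, I would introduce three families according to the last $\Sigma_P$-step: $p^\searrow_n(i,j)$ for a last step $(1,-1)$, $p^\nwarrow_n(i,j)$ for a last step that is an opening step or a $(-2,0)$, and $p^\uparrow_n(i,j)$ for a last step that is a $(0,2)$ continuation, each counting $\Sigma_P$-walks from the origin to $(i,j)$ in the corresponding state and encoding a $P$-admissible walk with exactly $n$ steps --- equivalently, with $n$ steps among the opening steps and the $(1,-1)$ steps. Removing the last $\Sigma_P$-step, and recording from each state which previous states and which displacement are admissible (and using that a step cannot cross the boundary of the quadrant to get $p^*_n(i,j)=0$ for $i<0$, $i>n$, $j<0$, $n\le0$, apart from the formal empty walk $p^\searrow_0(0,0)=1$), produces precisely Equations~\eqref{eq1p}--\eqref{eq3p} and the recurrence for $p^\uparrow_n$: the index $n$ decreases by one exactly when the removed $\Sigma_P$-step is an opening step or a $(1,-1)$ step (one genuine tandem step is peeled off), and is unchanged when a continuation step is removed. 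Finally, a $P$-admissible quadrant tandem walk ending on the horizontal axis must terminate with a $(1,-1)$ step --- a horizontal face-step landing at ordinate $0$ would start at ordinate $0$, which is even, hence would need $j\ge1$, a contradiction --- so $p_n=\sum_{i\ge0}p^\searrow_{n-1}(i,0)$.

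I expect the middle step to be the main obstacle: fixing the opening-step rule so that the expansion is genuinely bijective and realizes (PA1)--(PA2) with no case overlooked, and then checking that the three-state last-step removal counts each encoded walk exactly once. The subtle point is that a $(-2,0)$ step (and likewise a $(0,2)$ step) can play two roles --- opening step of a new block, or continuation step inside the current one --- and these must be kept apart; this is precisely why the index $n$ must count genuine tandem steps (openings plus SE steps) rather than $\Sigma_P$-steps, and why state $\nwarrow$ must be distinguished from state $\uparrow$, so that ``no $(-2,0)$ after $(0,2)$'' can be enforced. Once this transfer structure is in place, matching it against Equations~\eqref{eq1p}--\eqref{eq3p} is routine.
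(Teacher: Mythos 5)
Your approach is the paper's: re-encode $P$-admissible tandem walks as marked walks on the small step set $\Sigma_P=\{(1,-1),(-1,1),(-2,0),(0,2)\}$ by expanding each face step into a block (an opening step, then a run of $(-2,0)$'s, then a run of $(0,2)$'s), characterize the image by local rules, and run a three-state last-step removal indexed by the number of openings plus SE steps. Two points in your description need tightening before the derivation of Eqs.~\eqref{eq1p}--\eqref{eq3p} actually goes through. First, for a face step with both entries even the opening is not \emph{forced} by (PA1)--(PA2): for $(-2\ell,2r+2)$ starting at even ordinate with $\ell\geq1$, both $(0,2)$ and $(-2,0)$ are legal openings, so there is a genuine choice to make. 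The paper breaks this symmetry by convention (opening $(0,2)$ from even ordinate, opening $(-2,0)$ from odd ordinate), and that specific convention is what produces the $j$-even and $j$-odd variants of Eqs.~\eqref{eq2p}--\eqref{eq3p}; claiming the opening is ``unique'' hides the very step that fixes the recurrence. Second, your stated rule that ``no $(-2,0)$ immediately follows a $(0,2)$ inside a block'' is too strong as written: in the canonical block $(0,2),(-2,0)^{\ell},(0,2)^{r}$ with $\ell\geq1$, the first continuation $(-2,0)$ does immediately follow the opening $(0,2)$, and this must be permitted. The correct constraint is that an unmarked (continuation) $(-2,0)$ may only follow its block's opening or another unmarked $(-2,0)$ --- i.e.\ it may not follow an \emph{unmarked} $(0,2)$ --- which is precisely why the $\nwarrow$ state must lump together ``any opening'' with ``unmarked $(-2,0)$''. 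With these two clarifications, the argument coincides with the paper's.
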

  \begin{proof}
By Proposition~\ref{prop:poly_tandem}, $p_n$ is the number of $P$-admissible tandem walks of length $n$ starting from the origin and ending on the $x$-axis. 
Our strategy is again to show that $P$-admissible tandem walks identify with  
    certain marked tandem walks on a well chosen small step set, namely
    $E=\{(1,-1),(-2,0),(0,2),(-1,1)\}$. In order to do that in absence
    of weighting, we break the symmetry arbitrarily and we decompose face
    steps as follows:
    \begin{itemize}
    \item each face step of the form $(-2\ell-1,2r+1)$ is mapped to a
      sequence starting with a marked step $(-1,1)$, followed with $\ell$ steps
      $(-2,0)$ and ended by $r$ steps $(0,2)$;
    \item each face step of the form $(-2\ell,2r+2)$ that starts at
      even ordinate is mapped to a sequence starting with a marked
      step $(0,2)$, followed by $\ell$ steps $(-2,0)$, and ending with
      $r$ steps $(0,2)$;
    \item each face step of the form $(-2\ell-2,2r)$ that starts at
      odd ordinate is mapped to a sequence starting with a marked
      step $(-2,0)$, followed by $\ell$ steps $(-2,0)$, and ending with
      $r$ steps $(0,2)$;
    \end{itemize}
    Observe that the decomposition of a face step always starts with
    its unique marked step. Therefore, a $P$-admissible tandem walk can be
    recovered from the concatenation of the image of its steps, and
    its length corresponds to the number of marked or $(1,-1)$ steps in
    its image.
    
    Now we observe that the subset of marked tandem walks on $E$ that
    correspond to $P$-admissible walks is characterized by the
    following local rules:
    \begin{itemize}
    \item a step $(1,-1)$ can follow any kind of step,
    \item a marked step of type $(-1,1)$ can follow any kind of step,
    \item a marked step of type $(0,2)$ (resp. $(-2,0)$) can follow any
      kind of step provided it starts at even (resp. odd) ordinate,
    \item an unmarked $(-2,0)$ step can only follow a marked step or an
      unmarked $(-2,0)$ step,
    \item an unmarked $(0,2)$ step can only follow a marked step or an 
      unmarked step  $(-2,0)$ or  an unmarked step $(0,2)$.
    \end{itemize}
    For $n\geq 1$, let then $p^\searrow_n(i,j)$ (resp. $p^\nwarrow_n(i,j)$,
    resp. $p^\uparrow_n(i,j)$) denote the number of $P$-admissible quadrant tandem 
    walks starting at position $(0,0)$, ending at position $(i,j)$, 
    whose associated marked tandem walk on $E$ has $n$ marked or
    $(1,-1)$ steps, and ends with a step $(1,-1)$ (resp. with a marked
    step or unmarked $(-2,0)$ step, resp. with a $(0,2)$ step).  The
    local rules then clearly imply the announced last step
    removal decomposition equations \eqref{eq1p}--\eqref{eq3p}, upon assuming $p^*_n(i,j)=0$ except for
    $p^\searrow_n(0,0)=1$, to ensure that the counted walks start with
    a marked step as expected.

    Finally, observe that a $P$-admissible quadrant tandem walk ending on the
    horizontal axis has to finish with a SE step, so that
    $p_n=\sum_{i\geq0}p_{n}^\searrow(i,0)$.
  \end{proof}
  
 The first terms, computed from the recurrence, are
  \[
  \sum_{n\geq 1} p_nt^n= t^3 + 3\,t^5 + 4\,t^6 + 15\,t^7 + 39\,t^8 + 122\,t^{9} + 375\,t^{10} + 1212\,t^{11}+ 3980\,t^{12} + O(t^{13}).
  \]
  
  \begin{rk}
   It is easy to see that if two rigid corner polyhedra yield the same Schnyder labeling, then they yield the same polyhedral orientation.  
This gives a mapping from $\tilde{\cS}_n$ to $\cP_n$, which is surjective (the surjectivity will appear clearly in Section~\ref{sec:contacts}, via a formulation in terms of tricolored contact-systems of curves). Hence,  $\tilde{s}_n\geq p_n$, which we observe on 
the initial terms (the coefficients start to differ from $n=8$). \dotfill
  \end{rk}
  
  \medskip
  
  Again the proposition can be refined, to take into account the number
  of red, blue, and green vertices: blue inner vertices and red inner 
  vertices correspond respectively to the numbers of iterations in
  Equation~\eqref{eq1p} at even or odd $j$ respectively. 
  Thus, %letting $p_{a,b,c}$ be the number of polyhedral orientations with $a$ inner red vertices, $b$ inner blue vertices,
  %and $c$ inner green vertices, 
  we have  $\sum_{a+b+c=n}p_{a,b,c}u^av^b=\sum_{i\ge0}p^{\searrow}_{n}(i,0)$, where $p^{\searrow}_{n}(i,0)$ is obtained from the same recurrence as in Proposition~\ref{prop:exact_count1}, except that the right-hand side in Eq~\eqref{eq1p} is to be multiplied by $u$ (resp. $v$) if $j$ is odd (resp. even). 
The first terms are
  
  \begin{equation*} % ERKAN : shift déjà fait sur cette equation !
    \begin{split}
    \sum_{a,b,c\geq1} p_{a,b,c}&u^av^bw^ct^{a+b+c}= uvwt^3 + (u^2v^2w+uv^2w^2+u^2vw^2)t^5 + 4u^2v^2w^2t^6 \\ &+ (u^{3} v^{3} w + 4 u^{3} v^{2} w^{2} + 4 u^{2} v^{3} w^{2} + u^{3} v w^{3} + 4 u^{2} v^{2} w^{3} + u v^{3} w^{3})t^7 + O(t^8).
  \end{split}
  \end{equation*}

\subsection{Asymptotic enumeration}\label{enumasympt}
Our main result regarding the asymptotic enumeration is to show that the growth rates of the coefficients $p_n$ and $s_n$ 
are respectively $9/2$ and $16/3$. We also conjecture in each case the exponent of the polynomial correction.  

\begin{prop}[upper bounds]\label{prop:bound}
The coefficients $p_n$ and $s_n$ satisfy the bounds 
$p_n\leq (9/2)^{n+1}$  and $s_n\leq 2\cdot (16/3)^{n}$. 
\end{prop}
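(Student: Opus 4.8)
The plan is to bound the relevant quadrant-walk counts by unconstrained (or half-space) walk counts, where the exponential growth rate is governed by the spectral radius of the step generating polynomial, and then extract the optimal exponential bound by an appropriate change of weights. For Schnyder labelings, recall from Proposition~\ref{prop:schnyder-tandem} that $s_n'$ counts $S$-admissible tandem walks from $(0,2)$ to $(2,0)$ with $n+2$ SE steps, and from the proof of Proposition~\ref{prop:exact_count1} that such a weighted walk identifies with an unweighted walk on the step set $\Sigma=\{(1,-1),(-2,2),(-3,1),(-1,3),(-2,0),(0,2)\}$ with certain parity and adjacency restrictions. Dropping the quadrant constraint, the parity restrictions, and the adjacency restrictions only increases the count, so $s_n'$ is at most the number of walks of some bounded length (at most $3(n+2)$, since each SE step is balanced against at most a bounded number of the other $\Sigma$-steps per face-step, and the total $y$-displacement is $2$) on the full step set $\Sigma$ starting and ending at fixed points. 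The generating function of a single $\Sigma$-step, weighted by $t$ per SE step and untracked otherwise, must be made summable; the right device is to assign a fugacity $x^i y^j$ to step $(i',j')$... more precisely weight $(1,-1)$ by $tx/y$ and each face-step $(-i,j)$ by $x^{-i}y^j$, and optimize over $x,y>0$ so that the total per-``block'' weight is minimized. One finds that the choice $x=y=1$ rescaled appropriately yields the constant $16/3$; concretely, grouping each SE step with the face-steps it ``pays for'' shows each inner face of type $(2\ell+2,2r+2)$ (etc.) together with its $\ell+r+1$ associated SE steps contributes a factor bounded by $(16/3)^{\ell+r+1}$ times the binomial weight $\binom{\ell+r}{r}$, and $\sum_{\ell,r}\binom{\ell+r}{r}z^{\ell+r}$ converges for $z<1/4<3/16$... so summing over all face types the geometric-type series converges and gives $s_n'\le C\,(16/3)^n$ with an explicit constant. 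A small additional factor $2$ then absorbs the convolution $s_n=s_n'+2s_{n-1}'+s_{n-2}'$ from Remark~\ref{rk:constraint} together with the constant $C$, yielding $s_n\le 2\cdot(16/3)^n$.

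For $p_n$, the argument is parallel but cleaner since there are no binomial weights. By the proposition preceding Proposition~\ref{prop:exact_count1}, $p_n$ is the number of $P$-admissible tandem walks of length $n-1$ from the origin to the $x$-axis, and by the proof of the corresponding exact-count proposition these identify with marked walks on $E=\{(1,-1),(-2,0),(0,2),(-1,1)\}$ whose number of marked-or-SE steps is $n-1$ (so the total length is at most $3(n-1)$, as each face step expands into at most... in fact the number of unmarked $(-2,0)$ and $(0,2)$ steps equals, over the whole walk, a quantity controlled by the net displacement which here is small). Again relax the quadrant, parity, and adjacency constraints. Group each ``marked block'' (a marked step followed by a run of unmarked $(-2,0)$'s and $(0,2)$'s) together with the SE steps that balance its negative $x$-displacement and its positive $y$-displacement: a block of the form (marked step, $\ell$ copies of $(-2,0)$, $r$ copies of $(0,2)$) has $x$-drop $2\ell+O(1)$ and $y$-rise $2r+O(1)$, hence is matched with $\approx 2\ell+2r$ SE steps... equivalently one checks that a face-step $(-i,j)$ of the original walk, which contributes one unit to the length, is matched with the $i$ SE steps killing its $x$-coordinate shift and indirectly with $j/2$ more via later face-steps, and a short computation shows that the ``cost per length-unit'' is at most $9/2$. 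More systematically: assign weight $t$ to each step of $E$ that is marked or $(1,-1)$, weight $\lambda$ (a free parameter) to each unmarked $(-2,0)$, and weight $\mu$ to each unmarked $(0,2)$; for a walk to be valid it must start and end near the origin, forcing the number of SE steps, $(0,2)$-type increments, and $(-2,0)$-type increments to be matched; eliminating these gives a generating function in $t$ alone whose radius of convergence, optimized over $\lambda,\mu$, is $2/9$. The optimal choice turns out to be $\lambda=\mu$ with a specific value making the ``SE vs. double-step'' trade-off balanced, reflecting that a height-$2$ rise followed by a unit SE descent and the $x$-shift bookkeeping produce the $9/2$; I would verify $9/2$ by solving $\tfrac{\partial}{\partial\lambda}=\tfrac{\partial}{\partial\mu}=0$ on the resultant, and since here there is no binomial weight the bounding series is a clean rational function, giving $p_n\le (9/2)^n$ on the nose with no extra constant (the boundary terms being absorbed because the dominant singularity contributes a subexponential factor $\le 1$ for the relevant range, or by a direct induction).

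The technical heart, and the step I expect to be the main obstacle, is making the ``grouping of SE steps with face-steps'' rigorous while keeping the constant sharp — in particular getting $p_n\le(9/2)^n$ with constant exactly $1$ and $s_n\le 2\cdot(16/3)^n$ with constant exactly $2$. The clean way to do this is a direct inductive / transfer-matrix argument: introduce the weighted generating functions $\sum_{i,j}s_n^{\searrow}(i,j)\,x^i y^j$ etc. from Proposition~\ref{prop:exact_count1} (and the analogous $p$ versions), choose $x,y$ so that each of the recurrences \eqref{eq1s}--\eqref{eq2s} (resp. \eqref{eq1p}--\eqref{eq3p}) becomes a contraction with ratio exactly $16/3$ (resp. $9/2$) per SE step, i.e. verify that with the right $(x,y)$ every right-hand side is bounded by $\rho$ times a combination of left-hand-side-type quantities at smaller $n$, where $\rho$ is the target growth rate; then a one-line induction on $n$ gives the bound, and the final substitution $x=2,y=0$-type evaluation (reading off $s_{n+2}^{\searrow}(2,0)$) contributes only the harmless constant. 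The binomial weights in the Schnyder case are what force the extra factor of $2$: the generating polynomial of the weighted face-steps, $\sum_{\ell,r}\binom{\ell+r}{r}(x^{-1})^{2\ell}y^{2r}$-type sums, converges only when $x^{-2}+y^2<$ something, and pushing the parameters to the boundary of that region is exactly where the loose factor $2$ (rather than $1$) comes from; I would just check that the required geometric sums stay below the threshold at the optimal $(x,y)=(x_0,y_0)$ solving the saddle equation $x_0^2 y_0^{-1}=\rho$ with $\rho=16/3$.
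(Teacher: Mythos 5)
Your high-level strategy is the same as the paper's: weight walks multiplicatively by positive fugacities, bound quadrant counts by unrestricted counts, and optimize a saddle point; and you correctly identify the target rates $9/2$ and $16/3$. But two essential ideas are missing, and without them the argument does not close. First, the paper's whole proof hinges on the one-parameter specialization $(x,y)=(z^{-1/2},z^{1/2})$, which has the crucial property that the even-parity and odd-parity step generating functions \emph{coincide}: $\Se(z^{-1/2},z^{1/2})=\So(z^{-1/2},z^{1/2})=S(z)$ (and similarly for the $S$-admissible aggregated step). This collapses the bimodal $2\times2$ transfer matrix to a scalar, so the total weight of \emph{all} $n$-step unrestricted walks is simply $S(z)^n$. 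Your proposal ``optimize over $x,y>0$'' in two free variables: for a bimodal step set this leaves you with a genuine $2\times 2$ Perron eigenvalue problem, and your proposed induction on the recurrences of Proposition~\ref{prop:exact_count1} does not become a clean contraction by a single ratio unless you pick exactly this anti-diagonal ansatz — you never identify it. Second, getting the constants exactly $1$ and $2$ requires controlling the weight of the admissible endpoint, and the paper does this by an explicit injection to walks ending at the \emph{single} point $(1,1)$, which has weight $z^{0}=1$ (resp.\ contributes $z^{-1}$), so that $p_n\le S(z_0)^n$ and $s_n'\le z_0 S(z_0)^{n+1}$ follow immediately. You instead count walks ``ending on the $x$-axis'' or leave a loose constant $C$ that you hope a ``small additional factor $2$'' absorbs; this is not justified.

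There are also local errors in the sketch: the length bound via ``$y$-displacement is $2$'' does not control the number of $(-2,0)$ steps (that requires the $x$-budget); the claim ``$\sum_{\ell,r}\binom{\ell+r}{r}z^{\ell+r}$ converges for $z<1/4<3/16$'' is incoherent (the series is $1/(1-2z)$, convergent for $z<1/2$, and the inequality is written backwards); the grouping ``$\ell+r+1$ associated SE steps'' for a face of type $(-(2\ell+2),2r+2)$ is off by one under the anti-diagonal projection; and the closing evaluation ``$x=2,y=0$'' should read $(x,y)=(2,1/2)$, i.e.\ $(z_0^{-1/2},z_0^{1/2})$ with $z_0=1/4$. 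None of these is fatal in isolation, but together with the two missing ideas above, the proof as written does not reach the claimed sharp bounds.
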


\begin{proof}[Proof for~$p_n$.]
  A $P$-admissible tandem walk of length $n$ starting at the origin and staying in the quadrant can not pass again by the origin (the point preceding the origin would have to be on the $x$-axis, but no horizontal step is allowed for a point on the $x$-axis), hence, if it ends on the $x$-axis, it ends at a point of the form $(2i+2,0)$ for some $i\geq 0$. It is then allowed to add a further step $(-2i-1,1)$ to reach the point $(1,1)$. This operation being injective,  $p_n$ is bounded by the number of $P$-admissible tandem walks of length $n+1$ from the origin to $(1,1)$. 

  Let $\Se(x,y)$ (resp. $\So(x,y)$) be the step-series for steps starting at even $y$ (resp. odd $y$). 
  Starting from a point at even $y$ the allowed steps are in 
  $\cSe:=SE\cup \{(-2i,2j),\ i\geq 0, j\geq 1\} \cup \{(-2i-1,2j+1),\ i,j\geq 0\}$, 
  and starting from a point of odd $y$ the allowed steps are in $\cSo:=SE\cup \{(-2i,2j),\ i\geq 1, j\geq 0\} \cup \{(-2i-1,2j+1),\ i,j\geq 0\}$).  
  Hence,
  \[
  \Se(x,y)=x\by+\frac1{1-\bx^2}\frac{y^2}{1-y^2}+\frac{\bx}{1-\bx^2}\frac{y}{1-y^2},\ \ \So(x,y)=x\by+\frac{\bx^2}{1-\bx^2}\frac{1}{1-y^2}+\frac{\bx}{1-\bx^2}\frac{y}{1-y^2}.
  \]
  Note that $\Se(z^{-1/2},z^{1/2})=\So(z^{-1/2},z^{1/2})=z^{-1}+2z/(1-z)^2=:S(z)$, valid (as a converging sum) for $z\in(0,1)$. For $z\in(0,1)$ the quantity $S(z)^n$ thus gives the total weight of $P$-admissible tandem walks of length $n$ starting at the origin, where those ending at $(i,j)$ (for every $(i,j)\in\mathbb{Z}^2$) are weighted by $z^{(j-i)/2}$. Note that every walk ending at $(1,1)$ has weight $1$ in this series, so that $p_n\leq S(z)^{n+1}$. The best bound is obtained for $z\in(0,1)$ minimizing $S(z)$. One finds $S'(z)=0$ for $z_0=1/3$, with $S(z_0)=9/2$. Hence $p_n \leq (9/2)^{n+1}$.
\end{proof}

\begin{proof}[Proof for~$s_n$.]
We have $s_n=s'_n+2s_{n-1}'+s_{n-2}'$, where $s_n'$ is equal to the number of $S$-admissible quadrant tandem walks with $n+2$ SE steps, from $(0,2)$ to $(2,0)$. Such walks have to begin and end with a SE step. Hence, $s_n'$ is also the number of $S$-admissible quadrant tandem walks from $(0,2)$ to $(1,1)$ with $n+1$ SE steps, and all these walks begin with a SE step. Such a walk is thus a sequence of $n+1$ aggregated steps, where an aggregated step is a sequence of steps formed by a SE step followed a nonnegative number of  (weighted) face-steps, and satisfying the required conditions of $S$-admissible tandem walks.  
Let $\tilde{S}^{ee}(x,y)$ (resp. $\tilde{S}^{eo}(x,y)$) be the step series of face-steps in $S$-admissible tandem walks starting at a point at even $y$ and ending at a point at even (resp. odd) $y$, and taking the weight of the face-step into account.   
Similarly, let $\tilde{S}^{oe}(x,y)$ (resp. $\tilde{S}^{oo}(x,y)$) be the step series of face-steps in $S$-admissible tandem walks starting at a point of odd $y$ and ending at a point of even (resp. odd) $y$. 
We have
\begin{align*}
\tilde{S}^{ee}(x,y)&=\tilde{S}^{oo}(x,y)= \sum_{\ell,r\geq 0}\binom{\ell+r}{r}\bx^{2\ell+2}y^{2r+2}=\frac{\bx^2y^2}{1-\bx^2-y^2},\\
\tilde{S}^{eo}(x,y)&= \sum_{\ell,r\geq 0}\binom{\ell+r}{r}\bx^{2\ell+1}y^{2r+3}=\frac{\bx y^3}{1-\bx^2-y^2},\\
\tilde{S}^{oe}(x,y)&= \sum_{\ell,r\geq 0}\binom{\ell+r}{r}\bx^{2\ell+3}y^{2r+1}=\frac{\bx^3 y}{1-\bx^2-y^2}.
\end{align*}
Now let $A^e(x,y)$ (resp. $A^o(x,y)$) be the series for a possibly empty aggregation of face-steps (not including an initial SE step), starting at even (resp. odd) $y$, in an $S$-admissible tandem walk.   These are specified by the system
\[
\left\{
\begin{array}{ll}
A^e(x,y)&=1+\tilde{S}^{ee}(x,y)A^e(x,y)+\tilde{S}^{eo}(x,y)A^o(x,y),\\
A^o(x,y)&=1+\tilde{S}^{oe}(x,y)A^e(x,y)+\tilde{S}^{oo}(x,y)A^o(x,y),
\end{array}
\right.
\]
which gives
\[
A^e(x,y)=\frac{x^2y^2 - xy^3 - x^2 + y^2 + 1}{x^2y^2 - x^2 + 2y^2 + 1},\ \ A^o(x,y)=\frac{x^3y^2 - x^3 + xy^2 + x - y}{x(x^2y^2 - x^2 + 2y^2 + 1)} .
\]
and then the step-series $S^e(x,y)$ and $S^o(x,y)$ for an aggregated step starting from even (resp. odd) $y$ are given by  
\[
S^e(x,y)=x\by A^o(x,y),\ \ \ S^o(x,y)=x\by A^e(x,y).
\]

Under the specialization $(x=z^{-1/2},y=z^{1/2})$, a simplification occurs (indeed, as for $P$-admissible walks, the parity influence on the step choice is not visible when projecting to the axis $\{y=-x\}$). We have 
\[
\tilde{S}^{ee}(z^{-1/2},z^{1/2})=\tilde{S}^{eo}(z^{-1/2},z^{1/2})=\tilde{S}^{oe}(z^{-1/2},z^{1/2})=\tilde{S}^{oo}(z^{-1/2},z^{1/2})=\frac{z^2}{1-2z},
\]
and $A^e(z^{-1/2},z^{1/2})=A^o(z^{-1/2},z^{1/2})=\frac{1}{1-2z^2/(1-2z)}$, so that
\[
S^e(z^{-1/2},z^{1/2})=S^o(z^{-1/2},z^{1/2})=\frac{z^{-1}}{1-2z^2/(1-2z)}=:S(z),
\]
which is valid (as a converging sum) for $z\in(0,\frac1{2}(\sqrt{3}-1))$. Then $S(z)^n$ is the series of $S$-admissible tandem walks with $n$ SE steps, starting from $(0,2)$ with a SE step, where every walk ending at $(i,j+2)$ is further weighted by $z^{(j-i)/2}$. Every walk ending at $(1,1)$ has contribution $z^{-1}$ to this series,   
  hence $s_n'\leq zS(z)^{n+1}$. We find $S'(z)=0$ for $z_0=1/4$, with $S(z_0)=16/3$. Hence $s_n'\leq 1/4\cdot (16/3)^{n+1}$, and $s_n=s'_n+2s_{n-1}'+s_{n-2}'\leq 2\cdot (16/3)^{n}$.
\end{proof}

\begin{thm}[growth rates]\label{thm:growth_rates}
The number $p_n$ of polyhedral orientations with $n$ inner vertices satisfies $\lim_{n\to\infty} p_n^{1/n}=9/2$, 
and the number $s_n$ of Schnyder labelings with $n$ inner faces satisfies $\lim_{n\to\infty} s_n^{1/n}=16/3$.
\end{thm}

\begin{proof}[Proof for $p_n$.] 
We use  the notation introduced in the proof of Proposition~\ref{prop:bound} for $p_n$ 
(in particular, $z_0=1/3$ and $S(z_0)=9/2$).   
Given Proposition~\ref{prop:bound}, it is enough to prove that $\lim \mathrm{inf}\ p_n^{1/n}\geq 9/2$. We let $\mue$ be the probability distribution on $\cSe$ where $\mue(i,j)=z_0^{(j-i)/2}/S(z_0)$ for $(i,j)\in\cSe$, and $\muo$ be the probability distribution on $\cSo$ where $\muo(i,j)=z_0^{(j-i)/2}/S(z_0)$ for $(i,j)\in\cSo$, and we define the \emph{$P$-admissible random walk} as the random walk (on $\mathbb{Z}^2$) starting from the origin where each step is drawn respectively under $\mue$ or $\muo$ depending on the current point having even or odd $y$. Let $P_n$ be the $P$-admissible random walk   formed by the first $n$ steps.  
For $\pi$ a $P$-admissible tandem walk of length $n$ starting at the origin and ending at $(i,j)$, we thus have $\mathbb{P}(P_n=\pi)=z_0^{j-i}/S(z_0)^n$ . 
%Note that $P_n$ follows the unique distribution on $P$-admissible walks of length $n$ (starting at the origin) such that the probability of a walk 
%is proportional to $z_0^{(j-i)/2}$, with $(i,j)$ its endpoint. \marginpar{why not saying directly that $\mathbb{P}(P_n=\gamma)=z_0^{j-i}/S(z_0)^n$ if $\gamma\in P_n(i,j)$.}
A crucial point here is the following property: 

\medskip

\noindent {\bf (Box)}: ``The random walk $P_n$ stays asymptotically almost surely in the box $[- n^{2/3}, n^{2/3}]^2$." 

\medskip

In view of showing this property, let $\lambda(n,i,j)$ be the probability that $P_n$ ends at $(i,j)$, and let $\Lambda_n(x,y)=\sum_{i,j\in\mathbb{Z}^2}\lambda(n,i,j)x^iy^j$.  Let $F^e(t;x,y)=\sum_{n,i,j}a_{n,i,j}t^nx^iy^j$,   where $a_{n,i,j}$ (for $n\geq 0$ and $i,j\in\mathbb{Z}^2$) is the number of $P$-admissible tandem walks of length $n$ (on $\mathbb{Z}^2$)  starting at any fixed even point $(i_0,j_0)$ (i.e., with both $i_0,j_0$ even) and ending at $(i_0+i,j_0+j)$ (note that $F^e(t;x,y)$ does not depend on $(i_0,j_0)$ since we have no domain restriction). Similarly, let  $F^o(t;x,y)=\sum_{n,i,j}b_{n,i,j}t^nx^iy^j$, where $b_{n,i,j}$ (for $n\geq 0$ and $i,j\in\mathbb{Z}^2$) is the number of $P$-admissible tandem walks of length $n$ starting at any fixed odd point $(i_0,j_0)$ (i.e., with both $i_0,j_0$ odd) 
and ending at $(i_0+i,j_0+j)$. Thus, $\hat{F}^e(t;x,y):=F^e(t;x/z_0^{1/2},yz_0^{1/2})$ is the series 
of $P$-admissible walks starting at the origin, where each walk ending at $(i,j)$ is weighted by $z_0^{(j-i)/2}$. Consequently, 
\[
\Lambda_n(x,y) = \frac{[t^n]\hat{F}^e(t;x,y)}{[t^n]\hat{F}^e(t;1,1)}.
\]

 Let $\See\equiv\See(x,y)$ (resp. $\Seo\equiv\Seo(x,y)$) be the step-series for steps starting at even $y$ and ending at even (resp. odd) $y$, and let $\Soe\equiv\Soe(x,y)$ (resp. $\Soo\equiv\Soo(x,y)$) be the step-series for steps starting at odd $y$ and ending at even (resp. odd) $y$.
 The respective step-sets are $\cSee=\{(-2i,2j),\ i\geq 0,\ j\geq 1\}$, $\cSeo=\cSoe=\{(1,-1)\}\cup \{(-2i-1,2j+1),\ i\geq 0,\ j\geq 0\}$, and $\cSoo=\{(-2i,2j),\ i\geq 1,\ j\geq 0\}$, giving
 \[
 \See=\frac{1}{1-\bx^2}\frac{y^2}{1-y^2},\ \ \Soe=\Seo=x\by +\frac{\bx}{1-\bx^2}\frac{y}{1-y^2},\ \ \Soo=\frac{\bx^2}{1-\bx^2}\frac{1}{1-y^2}.
 \] 
Then, a first-step decomposition ensures that 
$F^e\equiv F^e(t;x,y)$ and $F^o \equiv F^o(t;x,y)$ satisfy the system
\begin{equation}
\left\{
\begin{array}{ll}
F^e &=\  1+t\, \See F^e + t\,\Seo F^o,\\
F^o &=\ 1+t\, \Soe F^e+ t\, \Soo F^o.
\end{array}
\right.
\end{equation} 

Solving this system, we find an explicit expression for 
$\hat{F}^e$ of the form 
\[
\hat{F}^e(t;x,y)=\frac{a_1(x,y)t+a_0(x,y)}{b_2(x,y)t^2+b_1(x,y)t+b_0(x,y)},
\] 
where $a_0(x,y),a_1(x,y),b_0(x,y),b_1(x,y),b_2(x,y)$ are 
polynomials in $x,y$, explicitly,

{\footnotesize
\begin{eqnarray*}
&a_0(x,y)=-3x^2y^4 + 9x^2y^2 + y^4 - 3y^2,\ \ a_1(x,y)=-9x^3y^3 + 27x^3y + 6xy^3 - 9xy - 3y^2,\\
&b_0(x,y)=-3x^2y^4 + 9x^2y^2 + y^4 - 3y^2,\ \ b_1(x,y)=-3x^2y^4 - 3y^2,\ \ b_2(x,y)=27x^4y^2 - 81x^4 - 27x^2y^2 + 27x^2.
\end{eqnarray*}
}

 The two roots (in $t$) of the denominator $b_2(x,y)t^2+b_1(x,y)t+b_0(x,y)$ are of the form 
\[1/\gamma^{\pm}(x,y)=\frac{x^2y^4+y^2\mp \sqrt{\Delta(x,y)}}{18x^2(1-3x^2+x^2y^2-y^2)},\] with $\Delta(x,y)$ a polynomial in $x,y$  such that $\Delta(1,1)=100>0$, explicitly,
{\footnotesize \[ 
\Delta(x,y)=36x^6y^6 + x^4y^8 - 216x^6y^4 - 48x^4y^6 + 324x^6y^2 + 216x^4y^4 + 14x^2y^6 - 216x^4y^2 - 48x^2y^4 + 36x^2y^2 + y^4.
\] }

Then, a partial 
fraction decomposition ensures that $\hat{F}^e(t;x,y)$ is of the form
\[
\hat{F}^e(t;x,y)=\frac{a_+(x,y)}{1-t\gamma_+(x,y)}+\frac{a_-(x,y)}{1-t\gamma_-(x,y)},
\] 
with $a_{\pm}(x,y)=-\frac{a_1(x,y)\gamma^{\pm}(x,y)+a_0(x,y)\gamma^{\pm}(x,y)^2}{2b_2(x,y)+b_1(x,y)\gamma^{\pm}(x,y)}$. Hence, 
\[
[t^n]\hat{F}(t;x,y)=a_+(x,y)\gamma_+(x,y)^n + a_-(x,y)\gamma_-(x,y)^n.
\]
 Since $a_+(1,1)=1>0$, and $\gamma_-(1,1)=-3$ while $\gamma_+(1,1)=9/2$ (so $|\gamma_-(1,1)|<\gamma_+(1,1)$), we have $[t^n]\hat{F}(t,x,y)=\Theta(\gamma_+(x,y)^n)$ valid for all $n$ and in a real neighbourhood $U$ of $(1,1)$, i.e., there are positive constants $C,C'$ such that, for all $(x,y)\in U$ and $n\geq 0$,  
 $C\,\gamma_+(x,y)^n \leq [t^n]\hat{F}(t;x,y) \leq C'\gamma_+(x,y)^n$.  
 Hence, in a real neighbourhoor of $(1,1)$, 
 \[
 \Lambda_n(x,y)  =\Theta\Big(\big(\gamma_+(x,y)/\gamma_+(1,1)\big)^n\Big). 
\]

We now look at the expansion of $g(x,y):=\gamma_+(x,y)/\gamma_+(1,1)$ at $(1,1)$. 
We find $\partial g/\partial x(1,1)=\partial g/\partial y(1,1)=0$. Since $g(x,y)$ is smooth at $(1,1)$, this ensures that $g(e^r,e^s)=1+O(r^2+s^2)$  in a real neighbourhood of $(0,0)$.  Now if $(X_n,Y_n)$ denotes the endpoint of $P_n$, then we have for every fixed $r\in\mathbb{R}$, 
\[
\mathbb{E}(e^{rX_n/\sqrt{n}})=\Lambda_n(e^{r/\sqrt{n}},1)=\Theta((1+O(1/n))^n)=O(1).
\]
In particular $\mathbb{E}(e^{X_n/\sqrt{n}})=O(1)$, and thus  by Markov's inequality there exists a constant $C>0$ such that 
$\mathbb{P}(X_n\geq x\sqrt{n})\leq Ce^{-x}$ for all $n\geq 1$ and $x>0$. By the union bound, the probability that $P_n$ visits a point of abscissa larger than $n^{2/3}$ is 
at most $\sum_{k=1}^n \mathbb{P}(X_k\geq n^{2/3})$. For $k\in[1..n]$, we have  $\mathbb{P}(X_k\geq n^{2/3})=\mathbb{P}(X_k\geq x\sqrt{k})$, where $x=\sqrt{n/k}n^{1/6}\geq n^{1/6}$. Hence,  
$\mathbb{P}(X_n\geq x\sqrt{n})=O(n\exp(-n^{1/6}))=o(1)$.  Similarly, using $r=-1$, one finds that the probability that $P_n$ visits a point of abscissa smaller than $-n^{2/3}$ is $o(1)$, and the same goes (considering $\Lambda_n(1,e^{\pm 1/\sqrt{n}})$) for the probability that $P_n$ visits a point of ordinate outside $[-n^{2/3},n^{2/3}]$. This concludes the proof of the property {\bf (Box)}. 

We let $\cB_n$ be the set of $P$-admissible tandem walks of length $n$ that start at the origin and stay in the box $[-n^{2/3},n^{2/3}]^2$. 
Recall that for $\pi$ a $P$-admissible walk of length $n$ ending at $(i,j)$, the probability that $P_n$ equals $\pi$ is $z_0^{(j-i)/2}/S(z_0)^n$. Hence every walk in $\cB_n$ has probability at most $(2/9)^n z_0^{-n^{2/3}}=(2/9)^{n+o(n)}$. Since the Box property ensures that the sum of probabilities of walks in $\cB_n$ is $1-o(1)$ we conclude that $|\cB_n|\geq (9/2)^{n+o(n)}$. It is now easy to conclude.  Letting $\cP_n$ be the set of $P$-admissible walks of length $n-1$ staying in the quadrant, starting at the origin and ending on the $x$-axis, and letting $n'$ be the smallest even integer that is at least $n^{2/3}$, and $k_n=3n'+8$, we have an injection $\iota$ from $\cB_{n-k_n}$ to $\cP_n$ that works as follows. 
Given $\pi\in\cB_{n-k_n}$ we construct $\iota(\pi)$ as the walk made of the step $(0,2n'+4)$, followed by $(n'+2)$ SE steps, at which point we attach the walk $\pi$ (since $\pi\in\cB_{n-k_n}$ and we start it at $(n'+2,n'+2)$ we strictly stay in the quadrant) to reach a point $(i,j)\in[2,2n'+2]^2$, from which we can canonically end the walk (we have to use $k_n-4-n'$ steps and end on the $x$-axis): if $j$ has same parity at $k_n-5-n'$ we append the step $(-2,k_n-5-n'-j)$ (by definition of $k_n$ we have $k_n-5-n'-j\geq 1$)  followed by $k_n-5-n'$ SE steps, otherwise we append the step $(-1,k_n-5-n'-j)$ followed by $k_n-5-n'$ SE steps. Thus, we have $p_n\geq  |\cB_{n-k_n}|\geq (9/2)^{n-k_n+o(n-k_n)}=(9/2)^{n+o(n)}$, so that $\lim \mathrm{inf}\ p_n^{1/n}\geq 9/2$.
\end{proof}

\begin{proof}[Proof for $s_n$.]
It follows the same lines, with the additional ingredient that face-steps have to be aggregated (since $S$-admissible tandem walks are counted with respect to the number of SE steps). The $S$-admissible tandem walks considered
here start with a SE step and are assumed to have their steps aggregated into groups formed by a SE step followed by a (possibly empty) sequence of face-steps (the original $S$-admissible walk stays in the quadrant iff the walk with aggregated steps stays in the shifted quadrant $\{x\geq 0, y\geq 1\}$).  
We use the notation of the proof of Proposition~\ref{prop:bound} for $s_n$ (in particular, $z_0=1/4$ and $S(z_0)=16/3$, and the notation $S^e(x,y)$ and $S^o(x,y)$).   
 We define $\mue$ to be the distribution on $\mathbb{Z}^2$ such that   $\mue(i,j)=a_{i,j}z_0^{(j-i)/2}/S(z_0)$, where $a_{i,j}=[x^iy^j]S^e(x,y)$,  and $\muo$ to be the distribution on $\mathbb{Z}^2$ such that  $\muo(i,j)=b_{i,j}z_0^{(j-i)/2}/S(z_0)$, where $b_{i,j}=[x^iy^j]S^o(x,y)$.  We define the \emph{random $S$-admissible (aggregated) tandem walk} 
 as the random walk (on $\mathbb{Z}^2$) starting  at the origin, where
 the next step is drawn under $\mue$ or $\muo$ depending on whether the current point has even or odd $y$. Letting 
   $\lambda(n,i,j)$ be the probability that the walk is at $(i,j)$ after $n$ steps, we define
  \[
  \Lambda_n(x,y):=\sum_{i,j\in\mathbb{Z}^2}\lambda(n,i,j)x^iy^j,
  \]
  Then, defining $a_{n,i,j}$ (resp. $b_{n,i,j}$) to be the total weight of  $S$-admissible tandem walks with $n$ aggregated steps,  starting at any even (resp. odd) point $(i_0,j_0)$  and ending at $(i_0+i,j_0+j)$, and defining $F^e\equiv F^e(t;x,y):=\sum_{n,i,j}a_{n,i,j}t^nx^iy^j$ and $F^o\equiv F^o(t;x,y):=\sum_{n,i,j}b_{n,i,j}t^nx^iy^j$, we have
  \[
  \Lambda_n(x,y)=\frac{[t^n]\hat{F}^e(t;x,y)}{[t^n]\hat{F}^e(t;1,1)},
  \]
  where $\hat{F}^e(t;x,y):=F^e(t,x/z_0^{1/2},yz_0^{1/2})$. 
  Moreover, the functions $F^e,F^o$ are specified by the system
  \[
  \left\{
  \begin{array}{ll}
  F^e&=1+tx\by \big( A^{oe}F^e + A^{oo}F^o \big),\\
    F^o&=1+tx\by \big( A^{ee}F^e + A^{eo}F^o \big),
  \end{array}
  \right.
  \]
  where $A^{ee}\equiv A^{ee}(x,y)$ (resp. $A^{eo}\equiv A^{eo}(x,y)$) is the series of aggregated face-steps (not including an initial SE step) starting at even $y$ and ending at even (resp. odd) $y$, and $A^{oe}\equiv A^{oe}(x,y)$ (resp. $A^{oo}\equiv A^{oo}(x,y)$) is the series of aggregated face-steps (not including an initial SE step) starting at odd $y$ and ending at even (resp. odd) $y$.   
  Using a first-step decomposition, these functions are themselves specified by systems in terms of the rational series $\tilde{S}^{ee},\tilde{S}^{eo},\tilde{S}^{oe},\tilde{S}^{oo}$ considered in the proof of Proposition~\ref{prop:bound}:
  \[
  \left\{
  \begin{array}{ll}
  A^{ee}&=1+\tilde{S}^{ee}A^{ee} + \tilde{S}^{eo}A^{oe}\\
  A^{oe}&=\tilde{S}^{oe}A^{ee} + \tilde{S}^{oo}A^{oe}\\
  \end{array}
  \right.,\ \ \   
  \left\{
  \begin{array}{ll}
  A^{eo}&=\tilde{S}^{ee}A^{eo} + \tilde{S}^{eo}A^{oo}\\
  A^{oo}&=1+\tilde{S}^{oe}A^{eo} + \tilde{S}^{oo}A^{oo}\\
  \end{array}
  \right. ,
  \]
  which yields
  \[
  A^{ee}=A^{oo}=\frac{x^2y^2 - x^2 + y^2 + 1}{x^2y^2 - x^2 + 2y^2 + 1},\ \ A^{eo}=\frac{-xy^3}{x^2y^2 - x^2 + 2y^2 + 1},\ \ A^{oe}=\frac{-y}{x(x^2y^2 - x^2 + 2y^2 + 1)}.
  \]
  The system for $F^e,F^o$ can then be solved, yielding an expression for $\hat{F}^e$ of the form $\hat{F}^e(t;x,y)=\frac{a_1(x,y)t+a_0(x,y)}{b_2(x,y)t^2+b_1(x,y)t+b_0(x,y)}$ where $a_0(x,y),a_1(x,y),b_0(x,y),b_1(x,y),b_2(x,y)$ are 
polynomials in $x,y$, explicitly,
{\small
\begin{eqnarray*}
&a_0(x,y)= 8x^2y^2-2x^2y^4 - y^4 - 2y^2,\ \ a_1(x,y)=32x^3y-8x^3y^3 - 2x^2y^4  - 2xy^3 - 8xy,\\
&b_0(x,y)= 8x^2y^2-2x^2y^4  - y^4 - 2y^2 ,\ \ b_1(x,y)=-2x^2y^4 - 2y^2,\ \ b_2(x,y)=32x^4y^2 - 128x^4 + 32x^2.
\end{eqnarray*}
}

The two roots (in $t$) of the denominator $b_2(x,y)t^2+b_1(x,y)t+b_0(x,y)$ are of the form 
\[1/\gamma^{\pm}(x,y)=\frac{x^2y^4+y^2\mp \sqrt{\Delta(x,y)}}{32x^2(x^2y^2 - 4x^2 + 1)},\] with $\Delta(x,y)$ a polynomial in $x,y$ such that $\Delta(1,1)=196>0$, explicitly,
 {\small
 \[
\Delta(x,y)= 64x^6y^6 + x^4y^8 - 512x^6y^4 + 32x^4y^6 + 1024x^6y^2 + 2x^2y^6 - 512x^4y^2 + 32x^2y^4 + 64x^2y^2 + y^4.
 \] 
 }
 
 Then, a partial 
fraction decomposition ensures that $\hat{F}^e(t;x,y)$ is of the form
\[
\hat{F}^e(t;x,y)=\frac{a_+(x,y)}{1-t\gamma_+(x,y)}+\frac{a_-(x,y)}{1-t\gamma_-(x,y)},
\] 
with $a_{\pm}(x,y)=-\frac{a_1(x,y)\gamma^{\pm}(x,y)+a_0(x,y)\gamma^{\pm}(x,y)^2}{2b_2(x,y)+b_1(x,y)\gamma^{\pm}(x,y)}$. Hence, 
\[
[t^n]\hat{F}(t;x,y)=a_+(x,y)\gamma_+(x,y)^n + a_-(x,y)\gamma_-(x,y)^n.
\]
 Since $a_+(1,1)=1>0$, and $\gamma_-(1,1)=-4$ while $\gamma_+(1,1)=16/3$ (so $|\gamma_-(1,1)|<\gamma_+(1,1)$), we have $[t^n]\hat{F}(t,x,y)=\Theta(\gamma_+(x,y)^n)$ valid for all $n$ and in a real neighbourhood of $(1,1)$. Hence, in a real neighbourhoor of $(1,1)$,
 \[
 \Lambda_n(x,y)  =\Theta\Big(\big(\gamma_+(x,y)/\gamma_+(1,1)\big)^n\Big). 
\]
The rest of the argument is again very similar to the proof for $p_n$. We first check that the function $g(x,y):=\gamma_+(x,y)/\gamma_+(1,1)$ is such that $\partial g/\partial x(1,1)=\partial g/\partial y(1,1)=0$, which ensures that the $S$-admissible tandem walk restricted to the first $n$ aggregated steps stays a.a.s. in the box $[- n^{2/3},n^{2/3}]^2$.  Then an injective construction similar to the one used for $\cP_n$ ensures that $s_n'\geq (16/3)^{n+o(n)}$. 
\end{proof}

We recall from the proof of Theorem~\ref{thm:growth_rates} that $p_{n-1}$ is closely related to the number $\tilde{p}_n$
 of $P$-admissible quadrant tandem walks of length $n$ from the origin to $(1,1)$.  
With the notation in the proof of Theorem~\ref{thm:growth_rates}, recall that $(X_n,Y_n)$ is the endpoint 
of the $P$-admissible random walk $P_n$, and that the probability that $P_n$ stays in the quadrant and ends at $(1,1)$ is $(2/9)^n\tilde{p}_n$. 
We have 
\[
\mathbb{E}(e^{rX_n+sY_n/\sqrt{n}})\sim g(e^{r/\sqrt{n}},e^{s/\sqrt{n}})^n,
\]
for some explicit function $g(e^r,e^s)=1+\frac1{2}a(r^2+s^2)+brs+o(r^2+s^2)$, where we find $a=72/5,b=-81/10$. A central limit theorem follows for $(X_n,Y_n)$, with zero drift, and with covariance matrix $\Sigma=\left(\begin{array}{ll}a&b\\ b&a\end{array}\right)$, that is, $(X_n/\sqrt{n},Y_n/\sqrt{n})$ converges in law to the normal 2d distribution that has zero mean and covariance matrix $\Sigma$. It is also expected that $P_n$ (renormalized by a factor $\sqrt{n}$) converges to a Brownian motion with the same covariance matrix.   
Results by Denisov and Wachtel~\cite[Theo.6]{denisov2015random} 
(to be extended to a bimodal setting in order to be applicable here) then indicate that the probability that $P_n$ stays in the 
quadrant and ends at $(1,1)$ should behave as $\kappa\, n^{-1-\pi/\arccos(\xi)}$, with $\kappa$ a positive constant, and $\xi=-b/a=9/16$. 

Similarly, for Schnyder labelings, there is a random walk $S_n$ of length $n$ starting at $(0,2)$ 
such that $4\cdot(3/16)^n\cdot s_{n-1}'$ is the probability that $S_n$ stays in the quadrant (actually, in the shifted quadrant $\{x\geq 0,y\geq 1\}$) and ends at $(1,1)$. As before, we find that a central limit theorem applies for the endpoint of $S_n$, again with zero drift and with a covariance matrix of the form $\left(\begin{array}{ll}a&b\\ b&a\end{array}\right)$, where this time $a=192/7,b=-1408/63$. Again it is expected that the probability that $S_n$ stays in the quadrant and ends at $(1,1)$ should behave as $\kappa'\, n^{-1-\pi/\arccos(\xi')}$, with $\kappa'$ a positive constant, and $\xi'=-b/a=22/27$. 

Based on this, we can conjecture the following: 

\begin{cjt}\label{cj:asympt}
We have $p_n\sim \kappa\cdot(9/2)^n\cdot n^{-\alpha}$, where $\alpha=1+\pi/\mathrm{arccos}(9/16)\approx 4.23$ and $\kappa$ is a positive constant; and  $s_n\sim \kappa'\cdot (16/3)^n\cdot n^{-\alpha'}$, where $\alpha'=1+\pi/\mathrm{arccos}(22/27)\approx 6.08$ and $\kappa'$ is a positive constant. 
\end{cjt}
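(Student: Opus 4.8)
We describe the strategy that we expect would establish Conjecture~\ref{cj:asympt}. It follows the by-now-classical scheme for counting lattice walks confined to a cone; the one genuinely missing ingredient is an extension of the Denisov--Wachtel machinery~\cite{denisov2015random} to a bimodal step distribution. By the reduction carried out in the proof of Theorem~\ref{thm:growth_rates} it suffices to pin down the polynomial order of a confined-endpoint probability for the tilted walks introduced there. Recall that, with $P_n$ the $P$-admissible random walk (step laws $\mue,\muo$ according to the parity of the current ordinate), the quantity $(2/9)^n\tilde p_n$ equals the probability that $P_n$ stays in the quadrant and ends at $(1,1)$; moreover $p_n\sim\tilde p_n$, since the walks to $(1,1)$ not produced by the injection in the proof of Proposition~\ref{prop:bound} all end at the boundary point $(0,2)$ and are of strictly smaller order. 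Likewise $4\cdot(3/16)^n s_{n-1}'$ is the probability that the aggregated $S$-admissible walk $S_n$ stays in the shifted quadrant $\{x\ge0,\ y\ge1\}$ and ends at $(1,1)$, and $s_n$ and $s_n'$ differ by an asymptotically constant factor. So everything is reduced to the order of growth of these two probabilities.

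First one would upgrade the pointwise central limit theorem recorded after Theorem~\ref{thm:growth_rates} to a functional one. As $\gamma_+(x,y)$ is the Perron eigenvalue of the $2\times 2$ transfer operator attached to the linear system for $(F^e,F^o)$, it is smooth near $(1,1)$ with vanishing gradient there, and a standard Perron--Frobenius/Markov-chain argument gives an invariance principle $P_{\lfloor nt\rfloor}/\sqrt n\Rightarrow(B_t)_{0\le t\le1}$, with $B$ a planar Brownian motion of covariance $\Sigma=\left(\begin{array}{ll}a&b\\ b&a\end{array}\right)$, where $a=72/5,\ b=-81/10$ for corner polyhedra and $a=192/7,\ b=-1408/63$ for Schnyder labelings; the statement for $S_n$ is entirely analogous. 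A linear change of coordinates diagonalising $\Sigma$ carries the quadrant onto a plane wedge of opening angle $\gamma=\arccos(-b/a)$, that is $\gamma=\arccos(9/16)$ for polyhedral orientations and $\gamma=\arccos(22/27)$ for Schnyder labelings, and the positive harmonic function of this wedge vanishing on its two sides is homogeneous of degree $p=\pi/\gamma$.

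The crux is then a local limit theorem, of Denisov--Wachtel type, for the walk \emph{killed on leaving the quadrant}: the probability of staying in the quadrant for $n$ steps and landing at the fixed interior point $(1,1)$ should decay like $C\,n^{-p-1}$, the exponent $p+d/2$ (with $d=2$) being accounted for by a factor $n^{-p/2}$ from the harmonic function near each of the two endpoints together with the $n^{-d/2}$ of the local CLT. Granting this, $\tilde p_n\sim C'(9/2)^n n^{-1-\pi/\arccos(9/16)}$ and $s_n'\sim C''(16/3)^n n^{-1-\pi/\arccos(22/27)}$, which yields the announced asymptotics for $p_n$ and $s_n$. That the corresponding series cannot be D-finite then comes for free: by Niven's theorem $\arccos(9/16)$ and $\arccos(22/27)$ are not rational multiples of $\pi$ (their cosines lie outside $\{0,\pm\tfrac12,\pm1\}$), so $\alpha$ and $\alpha'$ are irrational, whereas the polynomial exponent in the asymptotic expansion of any D-finite sequence is rational.

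The main obstacle is exactly this last step: Denisov and Wachtel assume \emph{i.i.d.} increments, while here the step law depends on the parity of the current ordinate, so $P_n$ (and similarly $S_n$) is a random walk in a $2$-periodic environment. A rigorous treatment would require redoing the three pillars of~\cite{denisov2015random} in this modulated setting: (i) constructing a pair $(V_e,V_o)$ of discrete harmonic functions for the killed bimodal walk --- one per ordinate-parity class --- and showing that both are asymptotic to the continuous wedge harmonic function; (ii) a strong, KMT-type coupling of the bimodal walk with the limiting Brownian motion, uniform enough to transfer exit-time and barrier estimates; and (iii) the ensuing local limit theorem inside the cone. All three look plausible, since the two-step walk is a genuine i.i.d.\ walk on a sublattice and should provide the needed moment and coupling bounds, but we do not pursue this here --- hence the statement remains a conjecture.
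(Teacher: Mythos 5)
Your proposal follows essentially the same route as the paper's heuristic: the same exponential tilting, the same covariance matrices $\Sigma$ with the same entries $a,b$, the same passage to a wedge of opening $\arccos(-b/a)$ and invocation of a Denisov--Wachtel local limit theorem (together with the same caveat that it must be extended to the $y$-parity-modulated step law), and the same irrationality-of-exponent route to non-D-finiteness --- with the only cosmetic difference that you invoke Niven's theorem where the paper runs the cyclotomic-factor test of~\cite{bostan2014non}, which is an equivalent check. One small slip worth flagging: it is not true that every $P$-admissible quadrant walk to $(1,1)$ outside the image of the injection from Proposition~\ref{prop:bound} ends with a step from $(0,2)$; a walk may also finish with a face-step $(-2k,0)$ from the interior point $(2k+1,1)$ for $k\geq1$, so the asymptotic equivalence $p_n\sim\tilde p_n$ needs a slightly different (but still plausible) justification, though this does not change the shape of the heuristic.
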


\begin{rk}
The two above-mentioned central limit results could also be established using the theory of Markov-modulated random walks~\cite{tang2008markov}, letting the two states of the underlying Markov chain correspond to the two types of lattice points (even-even coordinates or odd-odd coordinates). 
\end{rk}

\begin{rk}
Using the method in~\cite{bostan2014non} one can easily verify that the constants $\alpha$ and $\alpha'$ in Conjecture~\ref{cj:asympt} are irrational (e.g. for $\alpha$, we have to check that $X(z):=16z-9$ is such that 
$X(\frac1{2}(z+z^{-1}))=(8z^2 - 9z + 8)/z$ has no cyclotomic factor in its numerator, which is indeed the case since the numerator is irreducible of degree $2$ and  has coefficients of absolute value larger than~$2$).  
Hence, a corollary to  Conjecture~\ref{cj:asympt} would be that the generating functions of the sequences $p_n$ and $s_n$ are not D-finite. \dotfill
\end{rk}

\section{Relation to contact-systems of curves}\label{sec:contacts}

We briefly discuss the link between the structures considered here and contact-systems of curves. 
Let us first recall how plane bipolar orientations and transversal structures can be formulated as  bicolored contact-systems of curves. Using the operations shown in Figure~\ref{fig:contacts_local}(a), a plane bipolar orientation $X$ becomes a bicolored contact-system of  curves, i.e., a set of smooth curves that are either red (``horizontal" curves, arising from the vertices of $X$) or blue (``vertical" curves, arising from the faces of $X$, considering that there is a left outer face and a right outer face) and such that each contact is from the tip of a curve to a side of a curve of the opposite color (with the exception of the four ``outer" contacts where the four outer curves meet to form an outer ``rectangular" shape), see Figure~\ref{fig:contacts}(a) for an example. Such systems are to be considered up to continuous deformation, and the contacts along a side of curve can freely slide (the order of the contacts along a side of curve matters, but contacts that are on different sides of a curve are not comparable). In contrast, via duality, a transversal structure (or transversal edge-partition~\cite{fusy2009transversal}) is a bicolored contact-system with the same properties but with an additional rigidity: there is a total order of all the contacts along a given curve (not just an order on the contacts on each side of the curve); this is due to the fact that the contacts are really considered as trivalent vertices of a map, see Figure~\ref{fig:contacts}(b) for an example\footnote{Algorithms for finding a geometric representation of a bicolored contact-system where the red/blue curves are horizontal/vertical segments are developed in~\cite{Tamassia} for the ``free" case (based on plane bipolar orientations), and in~\cite{kant1997regular} for the ``rigid" case (based on transversal structures).}. 

\begin{figure}
\begin{center}
\includegraphics[width=14.6cm]{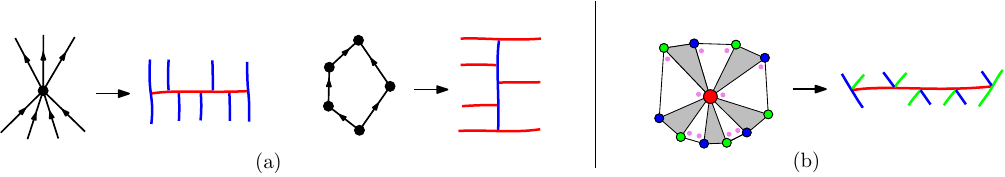}
\end{center}
\caption{(a) Local operations performed at vertices and faces of a plane bipolar orientation to yield a bicolored contact-system of curves. (b) Local operation performed at vertices of an Eulerian triangulation to yield a tricolored contact-system of curves.}
\label{fig:contacts_local}
\end{figure}

\begin{figure}
\begin{center}
\includegraphics[width=14.6cm]{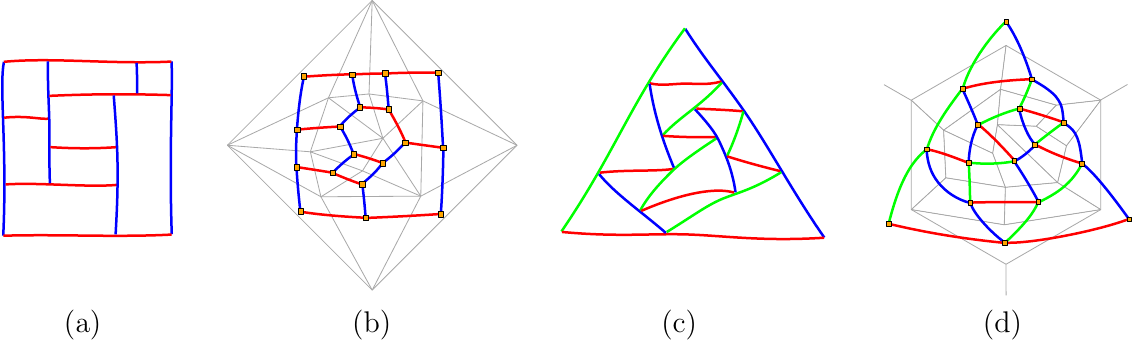}
\end{center}
\caption{(a) a free bicolored contact-systems (corresponding to a plane bipolar orientation), (b) a rigid bicolored contact-systems (dual to a transversal structure), (c) a free tricolored contact-systems (corresponding to a polyhedral orientation), (d) and a rigid tricolored contact-system (dual to a Schnyder labeling).}
\label{fig:contacts}
\end{figure}

\begin{figure}
\begin{center}
\includegraphics[width=14.6cm]{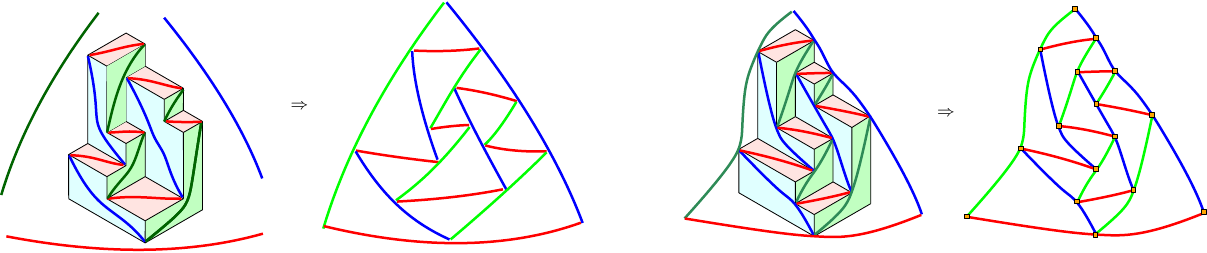}
\end{center}
\caption{Left: a corner polyhedron superimposed with the associated free tricolored contact-system. Right: a rigid orthogonal surface superimposed with the associated rigid tricolored contact-system. }
\label{fig:tricolored_superimposed}
\end{figure}

\begin{figure}
\begin{center}
\includegraphics[width=14.6cm]{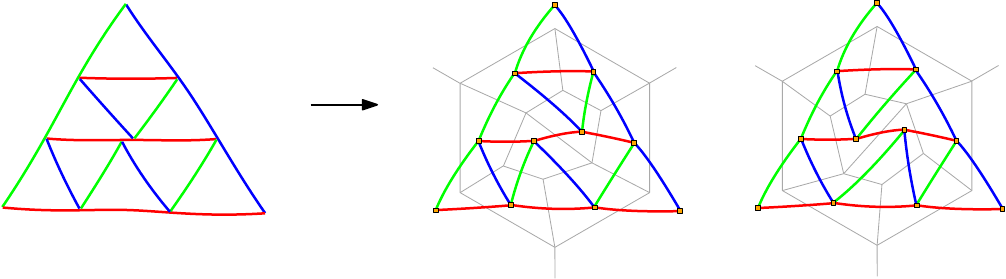}
\end{center}
\caption{On the left, the free tricolored contact-system from $\cP_{2,3,3}$ that yields two different rigid contact-systems (shown on the right).}
\label{fig:contacts_not_unique}
\end{figure}

On the other hand, the structures considered in this article are naturally associated to tricolored contact-systems, with  polyhedral orientations (resp. Schnyder labelings) playing the role of plane bipolar orientations (resp. of transversal structures)\footnote{As in the bicolored case, curves are considered up to deformation, the existence of a representation with curves as segments is studied in~\cite{felsner2020plattenbauten,gonccalves20193,gonccalves2012triangle}}.  Using the operation  shown in Figure~\ref{fig:contacts_local}(c), a polyhedral orientation becomes a system of curves that are red, blue or green, with contact points where the tips of two curves meet a side of another curve, such that the colors red, green, blue occur in ccw order around the contact (as for bicolored systems, there are three special contacts in the outer face, where the three outer curves meet to form a ``triangular" outer shape), see Figure~\ref{fig:contacts}(c) which shows the system associated with the polyhedral  orientation of Figure~\ref{fig:corner_poly} (the left part of Figure~\ref{fig:tricolored_superimposed} shows the correspondence starting from the corner polyhedron instead of the polyhedral orientation). As in the bicolored case, the contacts on each side of a given curve are free to slide, independently of the position of the contacts on the opposite side. On the other hand, via duality (and using Remark~\ref{rk:colors_sch}), 
a Schnyder labeling in $\tilde{\cS}$ is a similar contact-system, with the further rigidity that, for a given curve, there is a total order (along the curve) on the contacts-points (from both sides), see Figure~\ref{fig:contacts}(d) which gives the dual of the Schnyder labelling of Figure~\ref{fig:schnyder_lab} (the right part of Figure~\ref{fig:tricolored_superimposed} shows the correspondence starting from a rigid orthogonal surface instead of the Schnyder labelling). 
The condition that the three outer white vertices are non-isolated is necessary to avoid interior points of two outer curves to meet and form a pinch point. 

\begin{rk}
Regarding the sets and their relations, $\cP_n$ corresponds to the set of free tricolored contact-systems with $n+3$ curves, and (for $a+b+c=n$) $\cP_{a,b,c}$ corresponds to the subset of those with $a+1$ red curves, $b+1$ blue curves, and $c+1$ green curves. On the other hand,  $\tilde{\cS}_n$ corresponds to the set of rigid tricolored contact-systems with $n$ four-valent contact-points, equivalently, with $n+3$ curves (indeed, each four-valent contact-point is the tip of two 
inner curves, and each inner cuve has two tips that are four-valent); and (for $a+b=n+2$) $\tilde{S}_{a,b}$ corresponds to the subset of those having $a+1$ dark faces and $b-1$ light faces (including the outer one) for the canonical face-bicoloration of the underlying Eulerian map. Under the formulation as contact-systems, we clearly have a surjective  mapping from $\tilde{\cS}_n$ to $\cP_n$. Since $\tilde{s}_n\geq p_n$ starts to be strictly larger from $n=8$, the smallest free contact-system that lifts to more than $1$ rigid contact-systems has size $n=8$. The one in $\cP_{2,3,3}$ is shown in Figure~\ref{fig:contacts_not_unique} (by rotation, there is also one in $\cP_{3,2,3}$ and one in $\cP_{3,3,2}$; there is no other one at size $8$, since $\tilde{s}_8-p_8=3$).   
\dotfill
\end{rk}

\medskip

\noindent{\emph{Acknowledgements.} The authors are grateful to the anonymous referee for very helpful suggestions. They also thank Olivier Bernardi, Wenjie Fang, Shizhe Liang, and Kilian Raschel for interesting discussions and bibliographic pointers. 
The authors are partially supported by the project ANR-16-CE40-0009-01 (GATO) and the project ANR-20-CE48-0018 (3DMaps). \'EF is also partially supported by the project ANR19-CE48-011-01 (COMBIN\'E).

%We also note the remarkable property that the growth rate of the univariate counting coefficients for  these four families is a rational number (the growth rates are respectively 8, 27/2, 9/2, 16/3, with the families ordered as in     Figure~\ref{fig:contacts}).
%, and the subexponential contribution in the asymptotic equivalent (conjectural for the two classes considered here) is of the form $n^{-1-\pi/\arccos(\xi)}$, with $\xi$ a rational number (respectively 1/2, 7/8, 9/16 ??, ordered as in Figure~\ref{fig:contacts})

%% if you use biblatex then this generates the bibliography
%% if you use some other method then remove this and do it your own way

%\printbibliography

\bibliographystyle{99}
\bibliography{biblio_fpsac}

\end{document}